\def\Z{\mathbb{Z}}
\def\Q{\mathbb{Q}}
\def\C{\mathbb{C}}
\def\({\left(}
\def\){\right)}
\newcommand{\ord}{\operatorname{ord}}
\newcommand{\SL}{\operatorname{SL}}
\newcommand{\GL}{\operatorname{GL}}
\newcommand{\Frob}{\operatorname{Frob}}
\newcommand{\Sh}{\operatorname{Sh}}
\newcommand{\Gal}{\operatorname{Gal}}
\newcommand{\pfrac}[2]{\left(\frac{#1}{#2}\right)}
\newcommand{\pmfrac}[2]{\left(\frac{#1}{#2}\right)}
\newcommand{\pMatrix}[4]{\left(\begin{matrix}#1 & #2 \\ #3 & #4\end{matrix}\right)}
\newcommand{\ppmatrix}[4]{\left(\begin{smallmatrix}#1 & #2 \\ #3 & #4\end{smallmatrix}\right)}
\newcommand{\sk}{\big|_k }
\renewcommand{\(}{\left(}
\renewcommand{\)}{\right)}
\def\ep{\ep}
\def\ep{\varepsilon}
\let\temp\phi
\let\phi\varphi
\let\varphi\temp
\newcommand{\spmod}[1]{\ensuremath{\,(#1)}}
\newcommand{\Qbar}{\overline{\Q}}
\newcommand{\F}{\mathbb{F}}
\newcommand{\Fbar}{\overline{\F}}
\newcommand{\cO}{\mathcal{O}}
\newcommand{\rhobar}{\overline{\rho}}
\DeclareMathOperator{\PSL}{PSL}
\DeclareMathOperator{\PGL}{PGL}
\newtheorem{theorem}{Theorem}[section]
\newtheorem{lemma}[theorem]{Lemma}
\newtheorem{proposition}[theorem]{Proposition}
\theoremstyle{remark}
\newtheorem*{remark}{Remark}
\newtheorem*{notation}{Notation}
\numberwithin{equation}{section}
   \def\MR#1{}
\begin{document}


\title{Congruences like Atkin's for the partition function}
\author{Scott Ahlgren}
\address{Department of Mathematics\\
University of Illinois\\
Urbana, IL 61801} 
\email{sahlgren@illinois.edu} 
\author{Patrick B. Allen}
\address{Department of Mathematics and Statistics\\
McGill University\\ 
Montreal, Quebec H3A 0B9
} 
\email{patrick.allen@mcgill.ca} 
\author{Shiang Tang}
\address{Department of Mathematics\\
Purdue University\\
West Lafayette, IN 47907}
\email{tang573@purdue.edu} 

\thanks{The first author was  supported by a grant from the Simons Foundation (\#426145 to Scott Ahlgren).
The second author was supported by grants from the NSF (DMS-1902155) and NSERC.
The third author was supported by a grant from the NSF (DMS-1902155).
}

\subjclass[2020]{11F33, 11F80,  11P83}
 
\begin{abstract}
Let $p(n)$ be the ordinary partition function.  In the 1960s Atkin found a number of examples of congruences
of the form $p( Q^3 \ell n+\beta)\equiv0\pmod\ell$ where  $\ell$ and $Q$ are prime and  $5\leq \ell\leq 31$; these lie in two natural families distinguished by the square class of $1-24\beta\pmod\ell$.
 In recent decades much work has been done to understand congruences of the form 
 $p(Q^m\ell n+\beta)\equiv 0\pmod\ell$.  It is now known that there are many such congruences when $m\geq 4$, that such congruences are scarce (if they exist at all) when $m=1, 2$, and
that for $m=0$ such congruences exist only when $\ell=5, 7, 11$.   For congruences like Atkin's (when $m=3$), more examples have been found for $5\leq \ell\leq 31$ but little else seems to be known.

Here we use the theory of modular Galois representations to prove that for every prime $\ell\geq 5$, there are infinitely many congruences like Atkin's 
in the first natural family which he discovered and  that for at least $17/24$ of the primes $\ell$ there are infinitely many congruences in the second family.
\end{abstract}

\maketitle


\section{Introduction}
The partition function $p(n)$ counts the number of ways to write the positive integer $n$ as the sum of a non-increasing sequence of positive integers
(by convention we agree that $p(0)=1$ and that $p(n)=0$ if $n\not\in\{0, 1, 2, \dots\}$).
The study of the arithmetic properties of $p(n)$ has a long and rich history;  interest in this topic stems not only from the fact that $p(n)$ is a fundamental function in additive number theory and combinatorics, but also from the fact that its generating function is a modular form of weight $-\frac12$ on the full modular group.  

The most famous examples of arithmetic phenomena for the partition function are the  Ramanujan congruences
\begin{gather}\label{eq:ramcong}
p(\ell n+\beta_\ell)\equiv 0\pmod\ell
\quad\text{for }\ell=5, 7, 11
\text{,}
\end{gather}
where $\beta_\ell:=\frac1{24}\pmod\ell$.
Extensions of these results for arbitrary powers of $5, 7, 11$ were conjectured and proved by Ramanujan, Watson and Atkin
\cite{ram_someprop, ram_cong_PLMS,  ram_cong_mathz, watson,atkin_glasgow}.
On the other hand, after the work of the first author and Boylan \cite{Ahlgren-Boylan} it is known that there are no congruences of the form \eqref{eq:ramcong} 
with $\ell\geq 13$.

Further examples of congruences for primes $\ell\leq 31$ were found by Newman, Atkin,  and O'Brien \cite{newman, atkin-obrien, atkin_mult}.
These examples  take the form
\begin{gather}\label{eq:partcong}
p( Q^m \ell n+\beta)\equiv 0\pmod\ell
\text{,}
\end{gather}
where $Q$ is a prime distinct from $\ell$, and $m=3$ or $4$.

Many years later, Ono \cite{Ono_annals} showed that for every $\ell\geq 5$, there are infinitely many primes $Q$ for which we have a congruence \eqref{eq:partcong} with $m=4$.  After the work of the first author and Ono \cite{Ahlgren-Ono}, we have the following (see \cite[Thm. 1]{ahlgren-ono_conj}).

\begin{theorem} Suppose that  $\ell\geq 5$ is prime and that $\pfrac{1-24\beta}\ell\in \{0, - 1\}$.
 Then a positive proportion of primes $Q\equiv -1\pmod\ell$ have the property that 
\begin{gather}\label{eq:aothm}
p\pmfrac{Q^3n+1}{24}\equiv 0\pmod \ell\qquad \text{if $Q\nmid n$  \ and  \ $n\equiv 1-24\beta\pmod\ell$.}
\end{gather}
\end{theorem}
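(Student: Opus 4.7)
The plan is to translate the congruence \eqref{eq:aothm} into a statement about the vanishing of a Hecke operator modulo $\ell$, and then use mod $\ell$ Galois representations and Chebotarev to produce a positive density of suitable primes. First I would construct, using standard eta-quotient arguments, an integer-weight holomorphic cusp form $F = F_{\ell,\beta}$ on some $\Gamma_0(N)$ with $\ell \mid N$, such that the reduction of $F$ modulo $\ell$ has Fourier support on an arithmetic progression $m \equiv r \pmod{\ell}$ (where $r$ depends on $\beta$) and satisfies
\[
F \equiv \sum_{m \equiv r \pmod \ell} p\!\left(\frac{m+1}{24}\right) q^m \pmod{\ell}.
\]
The starting point is $\eta(24z)^{-1} = \sum_n p(n) q^{24n-1}$; multiplying by a suitable power of $\eta(24z)$ yields a holomorphic cusp form of integral weight $k$ congruent to $\eta(24z)^{-1}$ modulo $\ell$ (via $(1-q^n)^\ell \equiv 1-q^{\ell n} \pmod \ell$), and a projector built from quadratic twists by characters mod $\ell$ then isolates the correct progression. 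The hypothesis $\left(\tfrac{1-24\beta}{\ell}\right) \in \{0,-1\}$ is precisely what is needed for this projector to output a holomorphic cusp form (rather than one with uncontrolled behavior at cusps above $\ell$).

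Writing $F = \sum a(n) q^n$, the Hecke recursion $a(Qn) + Q^{k-1} a(n/Q) \equiv 0 \pmod{\ell}$ (valid whenever $T_Q F \equiv 0 \pmod \ell$ and $Q \nmid N\ell$) yields inductively $a(Q^{2s+1} n) \equiv 0 \pmod{\ell}$ for every $s \geq 0$ and every $n$ coprime to $Q$. Choosing the progression $r$ so that $Q^3 n$ lies in the support of $F$ whenever $Q \equiv -1 \pmod \ell$ and $n \equiv 1-24\beta \pmod \ell$, this is exactly the target congruence \eqref{eq:aothm}. So the problem reduces to producing a positive proportion of primes $Q \equiv -1 \pmod{\ell}$ with $T_Q F \equiv 0 \pmod{\ell}$.

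For this, Deligne--Serre attaches to each system of mod $\ell$ Hecke eigenvalues appearing in the Hecke module generated by $F$ a continuous semisimple odd representation $\bar\rho_i : G_\Q \to \GL_2(\Fbar_\ell)$ with $\Tr \bar\rho_i(\Frob_Q) \equiv a_i(Q) \pmod{\ell}$. The condition $T_Q F \equiv 0 \pmod{\ell}$ is then equivalent to the simultaneous vanishing of $\Tr \bar\rho_i(\Frob_Q)$ across the finitely many $i$. Let $K$ be the compositum inside $\Qbar$ of the fixed fields of the $\bar\rho_i$ together with $\Q(\zeta_\ell)$. A complex conjugation $c \in \Gal(K/\Q)$ satisfies $\chi_{\text{cyc}}(c) = -1$ (so any prime $Q$ with $\Frob_Q$ conjugate to $c$ satisfies $Q \equiv -1 \pmod{\ell}$) and, by oddness of each $\bar\rho_i$, also $\Tr \bar\rho_i(c) = 0$ for every $i$. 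Since $K \supseteq \Q(\zeta_\ell)$, the element $c$ is nontrivial in $\Gal(K/\Q)$, so its conjugacy class is nonempty and contained in the desired locus; Chebotarev then gives a set of primes of density at least $1/[K:\Q] > 0$.

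The main obstacle is Step 1: verifying that the constructed $F$ is not itself $\equiv 0 \pmod{\ell}$, i.e.\ that the partition values $p((m+1)/24)$ for $m$ in the relevant progression are not all divisible by $\ell$. This nonvanishing is what keeps the Hecke and Galois reductions from being vacuous, and is typically the most delicate input; it rests on earlier results on the distribution of $p(n) \pmod \ell$, which come from the theory of half-integral weight forms and the Shimura correspondence. A secondary technical point is ensuring that the sieving operator in Step 1 and the level-raising at $\ell$ do not introduce Eisenstein components whose trace conditions at $c$ could not be interpreted in the same Galois-theoretic framework; both are handled by carefully tracking weight, level, and cuspidality through the eta-quotient construction.
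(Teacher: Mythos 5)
The Chebotarev/complex-conjugation step at the end of your sketch---find $\sigma$ with $\chi(\sigma)=-1$ and $\operatorname{tr}\rhobar_i(\sigma)=0$ for every relevant eigensystem, which oddness guarantees for complex conjugation, then apply Chebotarev---is indeed the engine behind this theorem (it goes back to Serre and Ono, and is what Ahlgren--Ono do). But Step~1 contains a fundamental error that propagates through the rest. The partition generating function $\eta(24z)^{-1}=\sum p(n)\,q^{24n-1}$ has weight $-\tfrac12$, and multiplying by $\eta(24z)^{\ell^a}$ (the manipulation that preserves the Fourier expansion modulo $\ell$) yields weight $\tfrac{\ell^a-1}{2}$, a \emph{half}-integer since $\ell$ is odd. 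There is no integer-weight holomorphic cusp form with $F\equiv\sum p\pfrac{m+1}{24}q^m\pmod\ell$; the forms $f_\ell$ and $g_\ell$ of~\eqref{eq:fldef}--\eqref{eq:gldef}, which the paper (and Ahlgren--Ono) actually use, have half-integral weights $\tfrac{\ell-2}{2}$ and $\tfrac{\ell^2-2}{2}$ and transform with a power of the eta-multiplier. The relevant Hecke operator is therefore $T(Q^2)$, not $T(Q)$, governed by the three-term relation~\eqref{eq:heckedef}: if $T(Q^2)F\equiv 0\pmod\ell$ and $Q\nmid m$, examining the $Qm$-th coefficient kills the middle term via $\pmfrac{12Qm}{Q}=0$ and the last via $a(m/Q)=0$, giving $a(Q^3 m)\equiv 0\pmod\ell$. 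That is exactly where the exponent $3$ comes from. By contrast, your two-term integer-weight recursion and the $s=0$ case of your claim would give $a(Qn)\equiv 0$ for all $n$ coprime to $Q$, hence congruences $p(Q\ell n+\beta')\equiv 0\pmod\ell$ for a positive proportion of $Q$; this contradicts Corollary~1.2 of Ahlgren--Beckwith--Raum cited in the Introduction, which shows that such ``$m=1$'' congruences occur for a density-zero set of $Q$ when $17\le\ell<10000$.

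A second, related gap is that Galois representations are not attached directly to half-integral weight forms, so ``the Hecke module generated by $F$'' is not something Deligne--Serre applies to. The actual argument (see Lemma~\ref{lem:shimura} together with~\eqref{eq:shimliftcommute} and~\eqref{eq:shimcong} in this paper, following Ahlgren--Ono) passes to the Shimura lifts $\Sh_t f_\ell$, which land in $S_{\ell-3}^{\operatorname{new}}(6,\cdots)$, attaches Galois representations to the newforms there, locates the desired Frobenius behavior via Chebotarev and complex conjugation, and then transports the eigenvalue statement back to the half-integral weight form using $\Sh_t\(f\big|T(Q^2)\)=\(\Sh_t f\)\big|T(Q)$ and the equivalence $f\equiv 0\pmod\ell\iff\Sh_t f\equiv 0\pmod\ell$ for all squarefree $t$. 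Your proposal omits the Shimura correspondence entirely. So the Galois/Chebotarev idea at the end is the right one, but the modular-forms setup that feeds into it needs to be rebuilt from scratch in half-integral weight with $T(Q^2)$ and the Shimura lift.
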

For any such $\beta$, selecting $n$ in one of $Q-1$ residue classes modulo $Q$  gives a congruence of the form
\begin{gather}\label{eq:fourthpowercong}
p(Q^4\ell n+\beta')\equiv 0\pmod\ell
\end{gather}
with  $\pfrac{1-24\beta'}\ell=\pfrac{1-24\beta}\ell$.
Radu \cite{Radu_aoconj} confirmed a conjecture of the first author and Ono by proving that if there is a congruence
\begin{gather*}
p(mn+\beta)\equiv 0\pmod\ell
\quad\text{with }\ell\geq 5\ \ \text{prime}
\text{,}
\end{gather*}
then $\ell\mid m$ and $\pfrac{1-24\beta}\ell\in \{0, -1\}$.

After this discussion we know that there are many congruences of the form  \eqref{eq:partcong} with $m\geq 4$ and  no congruences other than 
\eqref{eq:ramcong} with $m=0$.  It  therefore  becomes natural to ask about the existence of such congruences  when 
$m=1, 2, 3$.

Recent work of the first author, Beckwith and Raum \cite{ahlgren-beckwith-raum} has shown that for  $m=1$ and $m=2$, and for any prime $\ell\geq 5$,  congruences of this form
(if they exist at all) are extremely scarce in a precise sense.    Since the main theorems of that paper require some notation to state, we  mention here only 
Corollary 1.2:
If $17\leq \ell<10000$ is prime, and $S$ is the set of primes $Q$ for which there is a congruence
\begin{gather*}
p(Q\ell n +\beta)\equiv 0\pmod\ell,
\end{gather*}
then $S$ has density zero.

This leaves open only the case $m=3$, which is the focus of this paper.
In this case, Atkin \cite{atkin_mult} discovered many congruences of the form
\begin{gather}\label{eq:atkincong}
p\(Q^3 \ell  n+\beta\)\equiv 0\pmod \ell
\end{gather}
 for small primes $\ell$.  These arise from two families   which we describe in detail.
 
Let $13\leq \ell\leq 31$ be prime. 
 Atkin   \cite[eq. (52)]{atkin_mult} gave examples of primes $Q$ such that
 \begin{gather}\label{eq:atkinI}
 p\pmfrac{Q^2\ell n+1}{24}\equiv 0\pmod\ell \ \ \text{if}\ \ \pmfrac{n}{Q}=\ep_Q
 \end{gather}
 for some $\ep_Q\in\{\pm1\}$.
 Fixing $n$ in one of the allowable residue classes modulo $Q$ produces a congruence of the form \eqref{eq:atkincong}.
For these small values of $\ell$, the relevant generating functions are   eigenforms of the Hecke operators, 
and Atkin's method relies on finding what he calls ``accidental" eigenvalues (Atkin works with modular functions rather than modular forms, but the effect is the same).
 We will say that congruences \eqref{eq:atkinI} are of type ``Atkin I."

 Later, Weaver \cite{weaver} found  more accidental eigenvalues for these primes (as well as  more examples of congruences \eqref{eq:aothm} with $\ell\mid n$). 
As an application of his performant algorithm to compute large values of the partition function,  Johansson  \cite{johansson}
extended this list substantially; there are now more than $22$ billion examples for primes $\ell\leq 31$.

For each of  $\ell=5, 7$ and $13$, Atkin \cite[Thm. 1, 2]{atkin_mult} showed that if $Q\equiv -2\pmod\ell$ then 
\begin{gather}\label{eq:atkinII}
 p\pmfrac{Q^2 n+1}{24}\equiv 0\pmod\ell \ \ \text{if}\ \ \pmfrac{-n}\ell=-1\ \ \text{and} \ \ \pmfrac{-n}{Q}=-1.
 \end{gather}
 We will say that congruences \eqref{eq:atkinII} are of type ``Atkin II."  To the authors' knowledge, no examples of such congruences are known for $\ell\geq 13$. After this discussion there are two natural questions:
 
 \begin{enumerate}
 \item Are there congruences of type Atkin I for  primes $\ell\geq 31$?
 \item Are there  congruences of type Atkin II for  primes $\ell\geq 13$?
 \end{enumerate}
We will refer to these congruences simply as ``Type I" and ``Type II" in what follows.

\begin{remark}
Once $\ell\geq 37$ the spaces of modular forms which are relevant for congruences of Type  I  are no longer one-dimensional.  However, one may still perform a search for accidents in the sense of Atkin.
For example, when $\ell=37$, the relevant space is two-dimensional.  
A computation of  the Hecke eigenvalues of the two newforms in this space  for $Q<10000$ yields three  ``accidents":
there are three primes $Q$ for which the $Q$th eigenvalue of each  newform lies in the required residue class modulo a prime above $\ell$ in the field generated by its coefficients.
In particular we have a congruence \eqref{eq:atkinI} when $Q=6599$, $7541$, and $9547$. For example, 
\begin{gather*}
 p\pmfrac{6599^2\cdot 37 n+1}{24}\equiv 0\pmod{37} \ \ \ \ \text{if}\ \  \ \  \pmfrac{n}{6599}=-1,
 \end{gather*}
 which leads to $3299$ congruences modulo $37$ of the form \eqref{eq:atkincong} with $m=3$.
 Similarly, we have 
 \begin{gather*}
 p\pmfrac{7541^2\cdot 37 n+1}{24}\equiv 0\pmod{37} \ \ \ \ \text{if}\ \  \ \  \pmfrac{n}{7541}=1.
 \end{gather*}

\end{remark}

Our goal in this paper is to prove that there are many congruences of the types which Atkin discovered.  In particular we will prove the following theorems.
(Note that for $\ell=5, 7, 11$, the statements about  congruences of Type I are trivially true in view of  \eqref{eq:ramcong}.)

The first result shows that congruences of Type I hold for every prime $\ell$
(an explicit description of what is meant by ``positive proportion" is given at the end of the Introduction).
\begin{theorem}\label{thm:Atkin1}   Suppose that $\ell\geq 5$ is prime. 
Then a positive proportion of the  primes $Q\equiv 1\pmod\ell$ have the property that 
\begin{gather*}p\pmfrac{Q^2\ell n+1}{24}\equiv 0\pmod\ell\qquad\text{if}\qquad \pmfrac nQ=\pmfrac{-1}Q^\frac{\ell-3}2.
\end{gather*}
\end{theorem}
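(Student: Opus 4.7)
The plan is to reduce the partition congruence to a Chebotarev density statement for the mod $\ell$ Galois representation attached to an integer-weight cusp form, by passing through the half-integral weight setting. First I would construct, following the framework pioneered by Ono and refined by Ahlgren--Ono, a half-integral weight cusp form $F_\ell$ of weight $k+\tfrac12$ (with $k$ growing linearly in $\ell$) on a congruence subgroup contained in $\Gamma_0(576)$, with an explicit quadratic character $\chi$, whose Fourier coefficients $a(n)$ satisfy $a(n)\equiv p\bigl((\ell n+1)/24\bigr)\pmod\ell$ for $n$ in the relevant residue class modulo $24$. Such $F_\ell$ is built from $\eta(24z)$, the operator $U_\ell$, and Eisenstein-like correction terms. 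The target congruence then amounts to the vanishing modulo $\ell$ of $a(Q^2 n)$ for $n$ in the specified quadratic class modulo $Q$.

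Next I would apply the standard formula for the action of $T_{Q^2}$ on half-integral weight forms. If $F_\ell$ is a mod-$\ell$ eigenform with $T_{Q^2}$-eigenvalue $\lambda_Q$, then for $Q\nmid n$,
\[
a(Q^2 n)\;\equiv\;\Bigl(\lambda_Q - \chi^*(Q)\pfrac{(-1)^k n}{Q} Q^{k-1}\Bigr) a(n)\pmod\ell.
\]
Imposing $Q\equiv 1\pmod\ell$ kills the factor $Q^{k-1}$, and a direct case analysis on $k \bmod 2$ and $Q\bmod 4$ identifies the sign pattern $\pfrac{n}{Q}=\pfrac{-1}{Q}^{(\ell-3)/2}$ with a specific congruence $\lambda_Q \equiv c(Q) \pmod\ell$, where $c(Q)\in\{\pm 1,\pm \chi^*(Q)\}$ depends only on $Q$ modulo a small modulus. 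By the Shimura correspondence, $\lambda_Q$ equals the $Q$-th Hecke eigenvalue of an integer-weight cusp form $G_\ell$ of weight $2k$, and by Deligne's construction there is an attached mod $\ell$ Galois representation $\bar\rho_{G_\ell}:\Gal(\bar\Q/\Q)\to\GL_2(\bar\F_\ell)$ with $\Tr \bar\rho_{G_\ell}(\Frob_Q) \equiv \lambda_Q \pmod\ell$.

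The theorem then reduces to the assertion that a positive proportion of primes $Q$ have $\Frob_Q$ lying simultaneously in a specific residue class modulo $24\ell$ (encoding $Q\equiv 1\pmod\ell$ together with the sign conventions coming from $\chi^*$ and $\pfrac{-1}{Q}$) and in a conjugacy class of $\bar\rho_{G_\ell}$ with the prescribed trace and determinant modulo $\ell$. This is a Chebotarev statement for the joint image of $\bar\rho_{G_\ell}$ and the relevant auxiliary Dirichlet characters, which yields positive density provided that joint image contains such an element. The main obstacle will be controlling the image of $\bar\rho_{G_\ell}$: one must rule out the reducible case, the dihedral case, and the exceptional $A_4$, $S_4$, $A_5$ cases, so that the image contains $\SL_2(\F_\ell)$ up to scalars and hence the required Frobenius class. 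My plan is to identify $G_\ell$ with an explicit (family of) newform(s) of small level and moderate weight, then invoke Ribet's large-image theorem for non-CM newforms, ruling out CM for $G_\ell$ either by a direct computation of a few low-index Hecke eigenvalues or by exploiting structural features of the construction of $G_\ell$ from $\eta(24z)$ that are incompatible with CM by any imaginary quadratic field of the relevant discriminant.
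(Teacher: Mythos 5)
Your high-level outline---construct a half-integral weight form $f_\ell$ with $a(n)\equiv p((\ell n+1)/24)\pmod\ell$, pass through the Shimura correspondence, attach a mod $\ell$ Galois representation, and finish with Chebotarev---does match the overall shape of the paper's argument. But there is a genuine gap where you write ``if $F_\ell$ is a mod-$\ell$ eigenform with $T_{Q^2}$-eigenvalue $\lambda_Q$'' and later ``identify $G_\ell$ with an explicit (family of) newform(s).'' The Shimura lifts of $f_\ell$ land in $S_{\ell-3}^{\mathrm{new}}\bigl(6, -\pfrac{8}{-\ell}, -\pfrac{12}{-\ell}\bigr)$, which is multi-dimensional once $\ell\geq 37$, and those lifts are \emph{not} eigenforms in general; they are unknown linear combinations of newforms. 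So you cannot simply speak of ``the'' eigenvalue $\lambda_Q$ or invoke a single Galois representation. What you actually need is to produce a positive-density set of primes $Q$ for which $T(Q)$ acts by the \emph{same} scalar mod $\ell$ on \emph{every} newform in that space simultaneously. This is the technical heart of the paper's argument, and your proposal does not engage with it. The paper handles it by first proving (Proposition~\ref{big-image}) that each newform's $\rhobar_f$ has image containing $\SL_2(\F_\ell)$, and then proving a group-theoretic ``independence of images'' result (Lemma~\ref{galois-extension} and Proposition~\ref{congruences}): roughly, the fixed fields of the distinct $\ker\rhobar_{f_i}$ are disjoint enough over $\Q(\zeta_\ell)$ that one can find $\sigma\in G_{\Q(\zeta_\ell)}$ with $\rhobar_{f_i}(\sigma)$ conjugate to a \emph{prescribed} element of $\SL_2(\F_\ell)$ for every $i$ at once. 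Taking that element to be $\bigl(\begin{smallmatrix}1&1\\-1&0\end{smallmatrix}\bigr)$ gives trace $1$ and determinant $1$, and the restriction to $G_{\Q(\zeta_\ell)}$ forces $Q\equiv1\pmod\ell$ for primes with that Frobenius class. Invoking ``Ribet's large-image theorem for non-CM newforms'' does not substitute for this: that theorem controls the image for a fixed form as $\ell$ grows, whereas here $\ell$ is fixed, the form varies with $\ell$, and the real issue is the \emph{joint} image of many representations.

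Two further issues are worth flagging. First, you assert that ``$Q\equiv1\pmod\ell$ kills the factor $Q^{k-1}$,'' but what the Hecke formula needs is the relation $T(Q^2)f\equiv \alpha_Q Q^{k-3/2}f\pmod\ell$ with $\alpha_Q\in\{\pm1\}$; the condition $Q\equiv1$ must be obtained \emph{as a consequence} of the choice of conjugacy class (via the cyclotomic character cutting out $G_{\Q(\zeta_\ell)}$), not imposed separately, or else you would have to argue that the two Chebotarev conditions are compatible. Second, the newforms have coefficients in number fields larger than $\Q$, so the natural congruences come mod a prime $\lambda$ above $\ell$ rather than mod $\ell$; converting a congruence for each newform mod $\lambda$ into a congruence for arbitrary elements of $S_{\ell-3}^{\mathrm{new}}(6,\Z)$ mod $\ell$ requires controlling denominators in the change of basis (the paper's Lemmas~\ref{lem:congruence} and~\ref{lem:from-lambda-to-ell}), which your sketch omits. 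Finally, the case analysis you propose for ruling out dihedral and exceptional image would need to use the specific features of the space---level $6$ squarefree and $f$ new at $2$ (to rule out reducible), weight $\ell-3\neq(\ell+1)/2,(\ell+3)/2$ (to rule out dihedral for $\ell\geq11$), and explicit LMFDB data for the small $\ell$---rather than a generic CM-exclusion argument.
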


The second result shows that for many primes $\ell$ we have  congruences of Types I and II involving primes $Q$ in a different residue class modulo $\ell$.
\begin{theorem}\label{thm:Atkin2}  Suppose that $\ell\geq 5$ is prime, and  that 
\begin{gather}\label{eq:2a}
\text{there exists an integer $a$ with $2^a\equiv -1\pmod\ell$.}
\end{gather}
Then  
\begin{enumerate}
\item 
A positive proportion of primes $Q\equiv -2\pmod\ell$ have the property that for some $\ep_Q\in \{\pm1\}$, we have
\begin{gather}\label{eq:type1}
 p\pmfrac{Q^2\ell n+1}{24}\equiv 0\pmod\ell \ \ \text{if}\ \ \pmfrac{n}{Q}=\ep_Q.
 \end{gather}
\item A positive proportion of primes $Q\equiv -2\pmod\ell$ have the property that 
\begin{gather}
 p\pmfrac{Q^2 n+1}{24}\equiv 0\pmod\ell \ \ \text{if}\ \ \pmfrac{-n}\ell=-1\ \ \text{and} \ \ \pmfrac{-n}{Q}=-1.
 \end{gather}
 \end{enumerate}
\end{theorem}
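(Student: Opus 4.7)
Our strategy follows the framework of Ahlgren--Ono \cite{Ahlgren-Ono}, translating the desired partition congruences into Hecke eigenvalue congruences for a half-integer weight modular form, and then applying the theory of mod $\ell$ Galois representations. We construct a half-integer weight modular form $F_\ell$ of weight $\lambda + 1/2$ on a suitable congruence subgroup, with quadratic character $\chi$, whose Fourier coefficients modulo $\ell$ capture the needed values of $p$: for Part 1 the coefficients encode $p((Q^2 \ell n + 1)/24) \pmod\ell$, while for Part 2 twisting $F_\ell$ by the Legendre symbol modulo $\ell$ isolates the residue class $\pmfrac{-n}{\ell} = -1$ and captures $p((Q^2 n + 1)/24) \pmod\ell$ restricted to this class. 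A direct computation with the half-integer weight Hecke operators $T_{Q^2}$ shows that in each part the desired partition congruence is equivalent to a Hecke eigenvalue congruence
\begin{equation*}
\lambda_{Q^2}(F_\ell) \equiv \pm\, \chi(Q)\, Q^{\lambda - 1} \pmod\ell
\end{equation*}
for an appropriate sign.

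Via the Shimura correspondence, each mod $\ell$ Hecke eigensystem in $F_\ell$ corresponds to an integer weight newform $G$ of weight $2\lambda$, to which Deligne--Serre attaches a semisimple Galois representation $\bar\rho \colon G_\Q \to \GL_2(\Fbar_\ell)$ with $\Tr\bar\rho(\Frob_Q) \equiv \lambda_{Q^2}(F_\ell) \pmod\ell$ and $\det\bar\rho(\Frob_Q) \equiv \chi(Q)^2\, Q^{2\lambda - 1} \pmod\ell$. The eigenvalue condition then translates into finding an element $\sigma$ in the image of $\bar\rho$ with prescribed trace $\pm\, \chi(-2)\, (-2)^{\lambda - 1}$, determinant $(-2)^{2\lambda - 1}$, and mapping to $-2 \in (\Z/\ell)^\times$ under the mod $\ell$ cyclotomic character.

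The main obstacle is verifying that such an element $\sigma$ actually lies in the image of $\bar\rho$. The hypothesis $2^a \equiv -1 \pmod\ell$ forces $-2 \in \langle 2\rangle \subseteq \F_\ell^\times$, providing the compatibility needed for the target determinant value $(-2)^{2\lambda - 1}$ to be attained by $\det\bar\rho$ at a Frobenius lying over the $-2$ coset; without this hypothesis the target conjugacy class can be empty in the relevant coset of the image, which (via Hasse's theorem on the order of $2 \pmod \ell$) explains the restriction to $17/24$ of primes $\ell$. We then rule out exceptional behaviour of $\bar\rho$---reducibility, dihedral image, or other small projective images---by exhibiting at least one eigensystem in $F_\ell$ whose $\bar\rho$ has projective image containing $\PSL_2(\F_\ell)$, using Serre/Ribet-type results on images of mod $\ell$ Galois representations attached to modular forms on our explicit levels. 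Chebotarev's density theorem, applied to the splitting field of $\bar\rho$ over $\Q(\zeta_\ell)$, then supplies a positive density of primes $Q \equiv -2 \pmod\ell$ fulfilling the required conditions, completing the proofs of both parts.
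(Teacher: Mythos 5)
Your proposal takes the ``big image plus Chebotarev'' route: show that every eigensystem entering $F_\ell \bmod \ell$ has a Galois representation with image containing $\SL_2(\F_\ell)$, then pick out, via Chebotarev, a Frobenius with the prescribed trace, determinant, and cyclotomic character value $-2$. That is close to how the paper proves Theorem~\ref{thm:Atkin1} (through Theorem~\ref{thm:lowweight}, Propositions~\ref{big-image} and~\ref{congruences}), but it is \emph{not} the argument the paper uses for Theorem~\ref{thm:Atkin2}, and in fact it does not work here. For Part (2) the relevant Shimura lifts live in $S_{\ell^2-3}^{\mathrm{new}}(6,-1,-1)$, and the paper states explicitly (remark following Proposition~\ref{big-image}) that the big-image conclusion \emph{fails} in general in weight $\ell^2-3$: in these large-weight spaces there can be reducible or dihedral (CM) mod $\ell$ eigensystems, and ``exhibiting at least one eigensystem with large image'' does nothing for the others. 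Since $f_\ell$ and $g_\ell$ reduce to a superposition of eigensystems, you must find a single $Q$ making $T(Q)$ act with the prescribed eigenvalue on \emph{every} constituent; for that the Goursat-type disjointness of Lemma~\ref{galois-extension} needs all images to be large, which you cannot guarantee. Even for Part (1), Proposition~\ref{congruences} as proved only produces $\sigma$ in $\Gal(\Qbar/\Q(\zeta_\ell))$, i.e.\ with $\omega(\sigma)=1$, so getting the coset $\omega(\sigma)=-2$ needs an extension of the argument.

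Your explanation of where the hypothesis $2^a\equiv -1\pmod\ell$ enters is also off. In a genuine big-image argument, once the image contains $\SL_2(\F_\ell)$, every coset of $\SL_2$ in the image is a full translate, so for any determinant value in the image of $\omega^{k-1}$ every trace is realized; the conjugacy class is never ``empty'' for that reason, and the hypothesis would then be superfluous. The hypothesis is needed for a different reason, which is the actual mechanism of the paper's proof: Theorem~\ref{weight k at 2} avoids the image question entirely by exploiting the local structure at the Steinberg prime $2$. Since each newform $g$ in the target space is $2$-new, Theorem~\ref{gal-rep}(\ref{gal-rep:at-q}) gives $(\rho_g|_{G_2})^{\mathrm{ss}}\cong \chi\psi\oplus\psi$ with $\psi$ unramified and $\psi(\Frob_2)=a_2=-\varepsilon_2 2^{(k-2)/2}$ (Atkin--Lehner). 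Taking $\sigma=\Frob_2^a$ (after upgrading $a$ via Lemma~\ref{elementary arithmetic} so that $2^a\equiv -2\pmod{\ell^r}$) immediately yields $\omega(\sigma)\equiv -2$ and an explicit trace $a_2^a(2^a+1)\equiv -(-\varepsilon_2)^a Q^{(k-2)/2}$ for \emph{every} such newform simultaneously, regardless of whether its mod $\ell$ image is large, dihedral, or reducible. This is why the hypothesis on the order of $2$ is exactly what is needed, and why no image computation is required. Your proposal is therefore missing the key idea (the Steinberg-at-2 computation) and rests on a big-image premise that is false in the high-weight case.
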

 \begin{remark} By a result of Hasse \cite{hasse:1966}, the proportion of primes $\ell$ for which \eqref{eq:2a} is satisfied  is $17/24\approx .708$.
 \end{remark}

Finally, we prove an analogous result under a similar assumption at the prime $3$, although the situation here is slightly more complicated.
We use the notation $S_{k}^{\rm new}\(6, \ep_2, \ep_3\)$ to denote the new subspace of modular forms of integral weight $k$ on 
$\Gamma_0(6)$ with  eigenvalues $\ep_2$ and $\ep_3$ under the Atkin-Lehner involutions $W_2$ and $W_3$ (see the next section for details).

\begin{theorem}\label{thm:Atkin3}
Suppose that $\ell\geq 5$ is prime, and that 
\begin{gather}\label{eq:3a}
\text{there exists an integer $a$ with $3^a\equiv -2\pmod\ell$.}
\end{gather}
Suppose further that 
there  is  no congruence  modulo any prime above $\ell$ between distinct newforms in $S_{\ell-3}^{\rm new}\(6, -\pmfrac{8}{-\ell}, -\pmfrac{12}{-\ell}\)$.
Then  
a positive proportion of primes $Q\equiv -2\pmod\ell$ have the property that for some $\ep_Q\in \{\pm1\}$, we have
\begin{gather}\label{eq:type1_3}
 p\pmfrac{Q^2\ell n+1}{24}\equiv 0\pmod\ell \ \ \text{if}\ \ \pmfrac{n}{Q}=\ep_Q.
 \end{gather}

If \eqref{eq:3a} holds and there is no congruence  modulo any prime above $\ell$ between distinct newforms in $S_{\ell^2-3}^{\rm new}\(6, -1, -1\)$
then a positive proportion of primes $Q\equiv -2\pmod\ell$ have the property that 
\begin{gather}\label{eq:type2}
 p\pmfrac{Q^2 n+1}{24}\equiv 0\pmod\ell \ \ \text{if}\ \ \pmfrac{-n}\ell=-1\ \ \text{and} \ \ \pmfrac{-n}{Q}=-1.
 \end{gather}

\end{theorem}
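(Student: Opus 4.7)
The plan is to mirror the Galois-representation strategy indicated for Theorem~\ref{thm:Atkin2}, replacing the role of the prime $2$ with the prime $3$ so that the natural level becomes $6$. The starting point is to realize $\sum p\bigl(\tfrac{n+1}{24}\bigr) q^n$ as the Fourier expansion of $\eta(24z)^{-1}$, a weakly holomorphic modular form of weight $-\tfrac12$. Applying $U_\ell$ together with a sieve by the relevant quadratic twists reduces each congruence in the statement to a condition on the action of the Hecke operator $T_{Q^2}$ on a cusp form of half-integral weight whose level incorporates the primes $2$ and $3$.

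After invoking the Shimura correspondence and absorbing the factor $\eta(24z)$ through multiplication by suitable theta series on $\Gamma_0(6)$, the relevant object becomes an integral weight cusp form with prescribed Atkin--Lehner eigenvalues. The hypothesis $3^a\equiv -2\pmod\ell$ enters precisely here: it guarantees that the twist at $3$ required to accommodate the residue class $Q\equiv -2\pmod\ell$ produces a Hecke eigenform well defined modulo $\ell$. For the Type~I congruences the resulting form lies in $S_{\ell-3}^{\rm new}\bigl(6, -\pfrac{8}{-\ell}, -\pfrac{12}{-\ell}\bigr)$; for the Type~II congruences an additional quadratic twist by the Legendre symbol modulo $\ell$ is realized (mod $\ell$) by multiplication by a power of the Hasse invariant, which raises the filtration weight to $\ell^2-3$ and forces both Atkin--Lehner eigenvalues to be $-1$, thereby landing in $S_{\ell^2-3}^{\rm new}(6, -1, -1)$.

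Under the no-congruence hypothesis, the mod-$\ell$ Hecke algebra on the relevant space of newforms is \'etale, so one may isolate a single newform $f$ and work with its mod-$\lambda$ Galois representation $\bar\rho_{f,\lambda}$, for a prime $\lambda\mid\ell$ in the coefficient field. The desired vanishing at $Q\equiv -2\pmod\ell$ translates, via the action of $T_Q$ on Fourier coefficients and Fermat's little theorem (so that $Q^{\ell-3}\equiv Q^{-2}\pmod\ell$), into the condition that $\Tr\bar\rho_{f,\lambda}(\Frob_Q)$ take a specific value in $\F_\ell$. Applying Chebotarev to the intersection of this trace condition with the preimage of $-2\pmod\ell$ under the cyclotomic character then yields the asserted positive proportion of primes $Q$.

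The central obstacle is showing that $\bar\rho_{f,\lambda}$ has sufficiently large image. Reducibility must be excluded using the weight and level together with standard analysis of the residual representation at $\ell$; dihedral images would force an induction from a quadratic field whose ramification is incompatible with the prescribed Atkin--Lehner signs at level $6$, and would typically produce a congruence with a distinct newform, contradicting the hypothesis; and the exceptional projective images $A_4$, $S_4$, $A_5$ are ruled out by the usual weight-bound arguments in the style of Serre and Ribet. Once $\bar\rho_{f,\lambda}$ is known to have image containing $\SL_2(\F_\ell)$ modulo scalars, the target Frobenius conjugacy class is nonempty of positive density, completing the proof. Executing this case analysis uniformly across both spaces $S_{\ell-3}^{\rm new}(6,\cdot,\cdot)$ and $S_{\ell^2-3}^{\rm new}(6,-1,-1)$, and checking that the constraints on the determinant of $\bar\rho_{f,\lambda}$ restricted to the subgroup cut out by $Q\equiv -2\pmod\ell$ do not accidentally force the trace condition to be empty, will be the technical heart of the argument.
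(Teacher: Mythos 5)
Your plan diverges from the paper's in a way that breaks, so let me identify the two main problems.

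First, your central technical step---showing that $\bar\rho_{f,\lambda}$ has image containing $\SL_2(\F_\ell)$ and then running Chebotarev with a trace condition---is not what is done here, and it cannot be made to work for the Type II half of the theorem. The paper proves Theorem~\ref{thm:Atkin3} (like Theorem~\ref{thm:Atkin2}) via Theorem~\ref{weight k at 3}, whose mechanism is entirely local at the prime $3$: since $6$ is squarefree and $f$ is $3$-new, Theorem~\ref{gal-rep}\ref{gal-rep:at-q} gives $(\rho_f|_{G_3})^{\mathrm{ss}}\cong\chi\psi\oplus\psi$ with $\psi(\Frob_3)=a_3=-\varepsilon_3\,3^{(k-2)/2}$ by Atkin--Lehner, and one applies Chebotarev to the single explicit element $\Frob_3^a$ (whose cyclotomic value $3^a\equiv -2$ and whose trace $(a_3)^a(3^a+1)\equiv -(-\varepsilon_3)^aQ^{(k-2)/2}$ are forced by \eqref{eq:3a}). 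No image computation is needed at all, and, crucially, Proposition~\ref{big-image} (large image) \emph{fails} in general for $S_{\ell^2-3}^{\rm new}(6,-1,-1)$, as the paper notes in the remark following it (Remark~\ref{atkin1vs2rmk}); that is exactly why Theorem~\ref{thm:Atkin1} is restricted to Type I. So a large-image argument cannot yield the Type II conclusion.

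Second, you have misread the role of the no-congruence hypothesis. It is not used to isolate a newform or make the mod $\ell$ Hecke algebra \'etale for an image argument. Rather, because there is no analogue of Lemma~\ref{elementary arithmetic} for the prime $3$ (one cannot improve $3^a\equiv -2\pmod\ell$ to a congruence modulo a higher power $\ell^r$), Theorem~\ref{weight k at 3} only gives a congruence mod $\lambda^{em}$, which for $m=1$ is just mod $\ell$ on each newform. The bootstrapping Lemma~\ref{lem:from-lambda-to-ell} cannot then be applied, so one needs Lemma~\ref{lem:mlambda}: the no-congruence hypothesis guarantees that when a form in $S_k^{\rm new}(6,\varepsilon,\Z)$ is written as $\sum c_i g_i$ in terms of newforms, the $c_i$ are $\lambda$-integral, which lets one pass from the mod $\lambda$ newform congruence to the mod $\ell$ congruence \eqref{eq:gq_cong} on the integral lattice. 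After that, the deduction proceeds exactly as in Theorem~\ref{thm:Atkin2} via Lemmas~\ref{lem:shimura} and \ref{lem:goodeigen}. Your proposal omits this lattice step entirely and substitutes an image analysis that the theorem neither uses nor could use for Type II.
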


 \begin{remark}
Approximately $82.8\%$ of the primes $\ell<100000$ satisfy either \eqref{eq:2a} or \eqref{eq:3a}.
For those which satisfy $\eqref{eq:3a}$ but not  \eqref{eq:2a} the additional hypothesis that there is no congruence between newforms is required
due to a technical issue which is described in the last section.
One expects that this condition should almost always be satisfied, and a computation shows that there
is  no congruence  modulo any prime above $\ell$ between distinct newforms in $S_{\ell-3}^{\rm new}\(6, -\pmfrac{8}{-\ell}, -\pmfrac{12}{-\ell}\)$ for any 
$\ell<150$.

However it seems very difficult to remove this condition.  For example when $\ell=71$, there is such a congruence between two newforms in the space
$S_{\ell-3}^{\rm new}\(6, \pmfrac{8}{-\ell}, \pmfrac{12}{-\ell}\)$.
In particular, this space contains a Galois orbit  consisting of three newforms  defined over a field which is ramified at $71$,
and this orbit gives rise to only two distinct reductions modulo the prime above $71$.
 \end{remark}

  To prove the existence of congruences \eqref{eq:fourthpowercong}, one must find primes $Q$ for which the Hecke operator of index $Q$
annihilates a suitable space of modular forms modulo some power of $\ell$; the existence of such primes is guaranteed by a result of Serre 
\cite[ex. 6.4]{serre_1976}. This approach extends to congruences modulo powers of $\ell$ \cite{Ahlgren_mathann, Ahlgren-Ono}, and by work of 
Treneer \cite{Treneer_1, Treneer_2} to a wide class of weakly holomorphic modular forms.
The current situation is more delicate; we need to find primes $Q$ for which the
Hecke operator acts diagonally with a specified eigenvalue, which entails a much more careful study of the Galois representations which arise.
In recent work, Raum \cite[Thm. E]{raum_2021} has considered the converse question; given the existence of such a congruence as \eqref{eq:fourthpowercong} or \eqref{eq:atkincong} he deduces information about the generalized Hecke eigenvalues $\lambda_Q$  on a natural subspace of modular forms.

The Galois theoretic results which we prove  may be of independent interest. 
The following theorem is the main theoretical input in the proof of  Theorem~\ref{thm:Atkin1}
 (here $S_k^{\rm new}(6)$ denotes the new subspace of cusp forms on $\Gamma_0(6)$ whose coefficients are integral at all primes above $\ell$).
 \begin{theorem}\label{thm:lowweight}
Suppose that $m$ is a positive integer and that $\ell \ge 5$ is prime.
Then a positive proportion of primes $Q\equiv 1\pmod{\ell^m}$ have the following  property:   for every
$g \in S_{\ell-3}^{\rm new}\(6\)$ we  have 
\begin{gather*}g\big| T(Q)\equiv    g\pmod{\ell^m}.\end{gather*}
\end{theorem}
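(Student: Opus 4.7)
My plan is to convert the assertion into a trace condition on a joint Galois representation attached to the newforms of $S_{\ell-3}^{\rm new}(6)$ and then apply the Chebotarev density theorem. Since $T(Q)$ acts linearly and $S_{\ell-3}^{\rm new}(6)\otimes\bar{\Q}_\ell$ has a basis of Hecke eigenforms, the asserted congruence $g\big| T(Q)\equiv g\pmod{\ell^m}$ for every $g$ is equivalent to the eigenvalue congruence $a_Q(f)\equiv 1\pmod{\lambda^m}$ for every newform $f$ in the space and every prime $\lambda$ of its coefficient field $K_f$ above $\ell$. To each such $f$ I will attach the Deligne $\ell$-adic Galois representation $\rho_{f,\lambda}\colon G_\Q\to\GL_2(\cO_{K_f,\lambda})$ satisfying $\operatorname{tr}\rho_{f,\lambda}(\Frob_Q)=a_Q(f)$ for $Q\nmid 6\ell$ and $\det\rho_{f,\lambda}=\chi_\ell^{\ell-4}$ (the nebentypus of forms on $\Gamma_0(6)$ is trivial).

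I will then form the joint representation
\[
R_m := \Bigl(\prod_f \rho_{f,\lambda}\bmod\lambda^m\Bigr)\times(\chi_\ell\bmod\ell^m)\colon G_\Q\longrightarrow \prod_f \GL_2(\cO_{K_f}/\lambda^m)\times(\Z/\ell^m)^{\times},
\]
with $f$ ranging over representatives of Galois orbits of newforms in $S_{\ell-3}^{\rm new}(6)$. A prime $Q\nmid 6\ell$ satisfies the conclusion precisely when $R_m(\Frob_Q)$ is conjugate to a tuple whose cyclotomic coordinate is $1$ and whose each $\GL_2$-coordinate has trace $\equiv 1\pmod{\lambda^m}$; by Chebotarev density it will suffice to exhibit a single element of this form in the image of $R_m$. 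Restricting $R_m$ to $\Gal(\bar\Q/\Q(\mu_{\ell^m}))$ kills the cyclotomic coordinate automatically, and since $\chi_\ell^{\ell-4}\equiv 1\pmod{\ell^m}$ on this subgroup, the restricted image $H_m$ already lives inside $\prod_f\SL_2(\cO_{K_f}/\lambda^m)$. The task therefore reduces to producing a tuple in $H_m$ each of whose coordinates is a trace-$1$ element of $\SL_2$, for instance the image of the order-$6$ matrix $\ppmatrix{0}{-1}{1}{1}$ with characteristic polynomial $x^2-x+1$.

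The central theoretical input will be an open-image result: for each non-CM newform $f$, by Serre--Ribet--Momose, the image of $\rho_{f,\lambda}$ is an open subgroup of $\{g\in\GL_2(\cO_{K_f,\lambda}) : \det g\in\chi_\ell(G_\Q)^{\ell-4}\}$, so its restriction to $\Gal(\bar\Q/\Q(\mu_{\ell^m}))$, reduced modulo $\lambda^m$, contains a full congruence subgroup of $\SL_2(\cO_{K_f}/\lambda^m)$ --- in particular it contains trace-$1$ elements. To combine the individual factors I will apply a Goursat-style argument: the projection of $H_m$ onto any pair of $\SL_2$-factors is the fibre product of the individual images over their maximal common quotient, and any mod-$\lambda$ congruence between distinct Galois orbits imposes a matching condition on trace-$1$ lifts which can be arranged because each factor carries enough trace-$1$ elements. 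Any CM newforms in the space will be handled separately, locating the trace-$1$ element inside the Cartan subgroup of the image (using that $\F_{\ell^2}^{\times}$ contains a primitive sixth root of unity for every $\ell\ge 5$).

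The main obstacle will be this Goursat/separation step: verifying that $H_m$ really does surject onto each pair of factors up to the expected fibre product, and producing compatible trace-$1$ lifts in the presence of possible mod-$\lambda$ congruences between distinct newforms in the space. Once a single trace-$1$ tuple is exhibited inside $H_m$, the Chebotarev density theorem applied to $R_m$ will yield a positive proportion of primes $Q\equiv 1\pmod{\ell^m}$ with $\Frob_Q$ in the corresponding conjugacy class, and for every such $Q$ the operator $T(Q)$ acts as the identity modulo $\ell^m$ on $S_{\ell-3}^{\rm new}(6)$.
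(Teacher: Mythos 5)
Your overall strategy --- convert the problem to a trace condition on a joint Galois representation and apply Chebotarev after exhibiting a single trace-$1$ tuple in the image --- is the same as the paper's, and your reduction to newforms, the observation that twisted pairs $\rhobar_{f_1}\cong\rhobar_{f_2}\otimes\omega^{(\ell-1)/2}$ become conjugate over $\Q(\zeta_\ell)$, and the target element $\ppmatrix{0}{-1}{1}{1}$ of order $6$ are all essentially what the paper does (cf.\ Lemma~\ref{proj-im} and the proof of Theorem~\ref{thm:lowweight}). The Goursat-style gluing you describe is the content of Lemmas~\ref{disjoint-im}--\ref{galois-extension}. However, there are two genuine gaps.

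First, your appeal to the Serre--Ribet--Momose open image theorem does not deliver what you need. Openness of $\rho_{f,\lambda}(G_\Q)$ in the relevant $\ell$-adic group only tells you the image contains some principal congruence subgroup $\ker(\SL_2(\cO_\lambda)\to\SL_2(\cO_\lambda/\lambda^n))$ for an unknown $n$; this does not put a trace-$1$ element (or any element of order $6$) in the mod-$\lambda^m$ image --- deep congruence subgroups consist of elements with trace near $2$, not $1$. What you actually need is that the \emph{residual} representation $\rhobar_f$ has image containing a conjugate of $\SL_2(\F_\ell)$. This is not automatic for the specific prime $\lambda\mid\ell$: Ribet's theorem gives big residual image for all but finitely many $\lambda$, with no control over which ones are excluded. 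The paper devotes Proposition~\ref{big-image} to establishing this, and its proof leans hard on the special features of the space $S_{\ell-3}^{\rm new}(6)$: the weight is $\le\ell-1$ (so Theorem~\ref{gal-rep}(\ref{gal-rep:at-l}) controls $\rhobar_f|_{I_\ell}$), the level is squarefree with $2^{\ell-4}\not\equiv 2^{\pm 1}\pmod\ell$ (forcing irreducibility via Lemma~\ref{irred}), the weight is not $\tfrac{\ell+1}{2}$ or $\tfrac{\ell+3}{2}$ (ruling out the dihedral case via Lemma~\ref{twist}), and $a_\ell\not\equiv 0$ at the handful of small $\ell$ that need separate treatment. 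Your proposal never supplies this input, and without it the whole construction has nothing of order $6$ to start from.

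Second, your plan to treat CM newforms as a separate case is both unnecessary and underdeveloped. It is unnecessary because Proposition~\ref{big-image}, which you would need anyway, shows that $S_{\ell-3}^{\rm new}(6)$ contains \emph{no} CM (or dihedral, or exceptional) newforms at $\ell$ --- the weight restriction makes the dihedral possibility force $k\in\{\tfrac{\ell+1}{2},\tfrac{\ell+3}{2}\}$, which $\ell-3$ never equals for $\ell\ge 11$, with the small cases checked directly. It is underdeveloped because even if a CM form were present, "locating a trace-$1$ element in the Cartan" does not obviously mesh with the Goursat gluing: a factor with dihedral image has a much smaller set of trace-$1$ elements and a very different lattice of normal subgroups, so the fibre-product analysis in your second paragraph would not reduce to a statement about $\PSL_2$ quotients in the way the paper's Lemma~\ref{galois-extension} exploits. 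A cleaner route, and the one the paper takes, is to prove uniformly big residual image first and then do the gluing among $\SL_2(\F_{q_i})$'s only.

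A smaller remark: once you have a trace-$1$ element mod $\lambda$, the paper lifts it to a trace-$1$ element mod $\lambda^r$ not by invoking deep congruence subgroups but by the elementary device of replacing $\sigma$ with $\sigma^{\ell^{r-1}}$, using Hensel and the fact that the sixth roots of unity are permuted by $x\mapsto x^{\ell^{r-1}}$ while the unit parts $\gamma,\delta\equiv 1\pmod\lambda$ become $\equiv 1\pmod{\lambda^r}$. This sidesteps any quantitative use of open image and is worth adopting. Also note that passing from "all newforms" to "all $g\in S_{\ell-3}^{\rm new}(6)$" requires controlling the index of the lattice spanned by newforms inside $S_{\ell-3}^{\rm new}(6,\Z)$, which is why the paper proves the newform congruence at a possibly higher modulus $\lambda^r$ with $r\ge m$ (Lemma~\ref{lem:from-lambda-to-ell}); your equivalence in the first paragraph is stated a little too quickly.
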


Above and in what follows, the positive proportion of primes appearing in our theorems are $\{2,3,\ell\}$-frobenian in the sense of \cite[\S 3.3]{serre-NXp} (we do not prove that the set of all primes satisfying the conclusions of our theorems is frobenian, just that it contains a frobenian subset). 
 This means that there is a finite Galois extension $E/\Q$ unramified outside of $\{2,3,\ell\}$ and a subset $C \subseteq \Gal(E/\Q)$ that is a union of conjugacy classes such that the conclusion of the theorem holds for any prime $Q$ with $\Frob_Q \in C$. 
We have not attempted to give a lower bound on the size of $\lvert C \rvert / \lvert G \rvert$, hence on the proportion of such primes $Q$, although in principle this is possible. 
We note the following subtlety.
 We first  prove Theorem~\ref{thm:lowweight}  under the further hypothesis that $g$ is a newform, where the frobenian set is more transparent, but modulo some possibly higher power $\ell^{m'}$ depending on how the lattice spanned by newforms sits inside of $S_{\ell-3}^{\rm new}\(6\)$ (see \S\ref{sec:cong-non-eig}). 
The density of our frobenian set depends on this $m'$, hence on the relationship between these two lattices.

In the next section we give some background on modular forms and Galois representations.  Section~3 is devoted to the proof of Theorem~\ref{thm:lowweight}. The proof is technical, and most of the difficulty arises in establishing the  result in the case $m=1$.
In Section~4 we use a different argument to prove two analogues of Theorem~\ref{thm:lowweight} in arbitrary weight; these results 
are the main input for the proofs of Theorems~\ref{thm:Atkin2} and \ref{thm:Atkin3}.    The last section contains the proofs of the three main theorems.

 \section*{Acknowledgments}
We thank Frank Calegari, Robert Dicks, J.-P. Serre, and the anonymous referee for helpful comments  on a previous version of this manuscript.

\section{Background}
\subsection{Modular forms}
If $f$ is a function on the upper half-plane,  $k\in \frac12\Z$,  and $\gamma=\ppmatrix abcd\in \GL_2^+(\Q)$, 
we define
\begin{gather*}
\(f\sk\gamma\)(\tau):=(\det\gamma)^\frac k2(c\tau+d)^{-k}f(\gamma\tau).
\end{gather*}
Given $k \in \frac{1}{2}\Z$, a positive integer $N$, a multiplier system $\nu$ in weight $k$ on $\Gamma_0(N)$, and a subring $A \subseteq \C$, 
we denote by $M_k(N, \nu, A)$, $S_k(N, \nu, A)$, and $M_k^!(N, \nu, A)$ the spaces of modular forms, cusp forms, and weakly holomorphic modular forms 
of weight $k$ and multiplier $\nu$ on $\Gamma_0(N)$ whose Fourier coefficients lie in $A$. For basic properties of mutiplier systems and modular forms one may consult for example \cite{knopp} and \cite{diamond_shurman}. These forms satisfy the transformation law
\begin{gather*}
f\sk \gamma=\nu (\gamma) f\ \ \ \text{for}\ \  \  \gamma=\ppmatrix abcd\in \Gamma_0(N)
\end{gather*}
as well as the appropriate cusp conditions (weakly holomorphic forms are allowed poles at the cusps).
 If $\nu=1$ we omit it from the notation.
If in addition $k$ is even,  we  let $S_k^{\operatorname{new}}(N, \C) \subseteq S_k(N, \C)$ denote the new subspace, and define
$S_k^{\operatorname{new}}(N,  A) := S_k(N,  A) \cap S_k^{\operatorname{new}}(N,  \C)$.

When $N$ is square-free, there is an Atkin-Lehner involution $W_p$ on $S_k(N, \C)$ for every prime divisor $p$ of $N$ \cite{atkin-lehner}. Given a 
tuple $\ep=(\ep_p)_{p\mid N}$ where each $\ep_p\in \{\pm1\}$, let $S_k^{\operatorname{new}}(N, \C, \ep)$ be the subspace consisting of those forms $f$ for which $f\big|_kW_p=\ep_p f$ for each prime $p\mid N$.

Throughout, $\ell\geq 5$ will denote a fixed prime number.
When $A$ is the subring of algebraic numbers that are integral at all primes above $\ell$, we omit it from the notation and simply write $M_k(N, \nu)$, $S_k(N, \nu)$, $M_k^!(N, \nu)$, and $S_k^{\operatorname{new}}(N)$.  
If in addition $\ell\nmid N$, then we write $S_k^{\operatorname{new}}(N, \varepsilon)$ for the subspace of $S_k^{\operatorname{new}}(N)$ attached to the tuple $\varepsilon$ (this makes sense since for such $\ell$, each 
  involution $W_p$ acts on $S_k^{\operatorname{new}}(N)$.)

The Dedekind eta function is defined by
\begin{gather*}
  \eta(\tau):=q^\frac1{24}\prod_{n=1}^\infty(1-q^n), \qquad q:=e^{2\pi i\tau}.
\end{gather*}
Then the  eta-multiplier $\nu_\eta$ is given by 
\begin{gather}\label{eq:etamult}
\eta(\gamma\tau)=\nu_\eta(\gamma)(c\tau+d)^\frac12\,\eta(\tau), \qquad \gamma=\pMatrix abcd\in \SL_2(\Z)\text{.}
\end{gather}

We collect some facts about modular forms which transform with a power of the eta-multiplier.  Proofs for some of the non-obvious facts can be found 
in Section 2 of \cite{ahlgren-beckwith-raum}.
If $f\in M_k\(1, \nu_\eta^r\)$,
then $\eta^{- r}f \in M^!_{k-\frac r2}(1)$.
It follows that $f$ has a Fourier expansion of the form
\begin{gather}
\label{eq:fourier-expansion}
  f
=
  \sum_{n\equiv  r\spmod{24}}
  a(n)q^\frac n{24}
\text{.}
\end{gather}
By (2.6) of  \cite{ahlgren-beckwith-raum} we have
\begin{gather}\label{eq:ktor}
M_k(1, \nu_\eta^r)=\{0\}\ \ \text{unless}\ \ r\equiv 2k\pmod 4.
\end{gather}
Let  $\nu_\theta$ be the multiplier on $\Gamma_0(4)$ attached to the theta function $\theta(\tau)=\sum q^{n^2}$
and define $f(\tau)\big|V_d:=f(d\tau)$.
By (2.7) of \cite{ahlgren-beckwith-raum} and  \eqref{eq:ktor} we have 
\begin{gather}\label{eq:etamultv24}
  f \in M_k\(1, \nu_\eta^r\)
\implies
  f\big| V_{24} \in M_k\(576,  \pmfrac{12}\bullet \nu_{\theta}^r \) =M_k\(576,  \pmfrac{12}\bullet \nu_{\theta}^{2k} \) 
\text{.}
\end{gather}
In particular $f\big| V_{24}$ is a modular form of half-integral weight in the sense of Shimura \cite{shimura}.

For each prime $Q\geq 5$ we have the Hecke operator
\begin{gather*}
  T(Q^2) \,:\, S_k\(1, \nu_\eta^r\)\rightarrow S_k\(1, \nu_\eta^r\)
\text{.}
\end{gather*}
If $f \in S_k\(1, \nu_\eta^r\)$  with $(r, 24)=1$ has Fourier expansion~\eqref{eq:fourier-expansion} then we have (see e.g. \cite[Proposition~11]{Yang_shimura})
\begin{gather}\label{eq:heckedef}
  f\big|T(Q^2)
=
  \sum \(
  a(Q^2n) + Q^{k-\frac32}\pmfrac{-1}{Q}^{k-\frac12}\pmfrac{12n}{Q} a(n) + Q^{2k-2}a\pmfrac{n}{Q^2}
  \)
  q^\frac n{24}.\end{gather}

For each squarefree $t$ with $(t, 6)=1$  there is a Shimura lift $\Sh_t$ on $S_k\(1, \nu_\eta^r\)$, defined via \eqref{eq:etamultv24} and the Shimura lift~\cite{shimura} on~$S_k\(576, \pfrac{12}\bullet \nu_\theta^{2k}\)$.
The action on Fourier expansions is given by 
\begin{gather}\label{eq:shimlift}
\Sh_t\(\sum a(n)q^\frac n{24}\)=\sum A_t(n)q^n,
\end{gather}
where 
\begin{gather}\label{eq:shimliftcoeff}
  A_t(n)
=
  \sum_{d\mid n}
  \pmfrac{-1}d^{k-\frac12}\pmfrac{12t}d d^{k-\frac32}\,
  a\pmfrac{tn^2}{d^2}
\text{.}
\end{gather}

Then we have (see \cite[(2.13)]{ahlgren-beckwith-raum})
\begin{gather}\label{eq:shimcong}
  f \equiv 0 \pmod\ell
\iff
  \Sh_t(f) \equiv 0 \pmod\ell
  \qquad \text{for all squarefree $t$.}
\end{gather}
From    work of Yang~\cite{Yang_shimura}
it follows  that we   have 
\begin{gather}\label{eq:shimliftimage}
  \Sh_t \,:\,
  S_k\(1, \nu_\eta^r\)
\longrightarrow
  S_{2k-1}^{\operatorname{new}}\(6, -\pmfrac{8}{r}, -\pmfrac{12}{r}\)\otimes \pmfrac{12}\bullet.
\end{gather}
Moreover, for all primes $Q\geq 5$ we have 
\begin{gather}\label{eq:shimliftcommute}
  \Sh_t\(f\big|T(Q^2)\)=\(\Sh_t f\)\big|T(Q)
\text{,}
\end{gather}
where $T(Q)$ is the Hecke operator of index $Q$ on the integral weight space.

The connection with partitions is given by the fundamental relationship 
\begin{gather*}
\frac1{\eta(\tau)}=\sum p \left( \frac{ n + 1}{24} \right) q^\frac{n}{24}.
\end{gather*}
For our applications there are 
 two important modular forms for each $\ell$ (see \cite[\S 2]{ahlgren-beckwith-raum} for the construction).
In particular, there is  a modular form  $f_{\ell}  \in S_{\frac{\ell - 2}{2}} \( 1, \nu_\eta^{-\ell}, \Z\)$ with
\begin{gather}\label{eq:fldef}
f_{\ell} \equiv \sum p \left( \frac{ \ell n + 1}{24} \right) q^\frac{n}{24} \pmod\ell.
\end{gather}
There is also a form  $g_{\ell}  \in S_{\frac{\ell^2 - 2}{2}} \(1, \nu_\eta^{-1}, \Z\)$ with 
\begin{gather}\label{eq:gldef}
g_{\ell} \equiv \sum_{\pfrac{-n}{\ell}=-1} p \left( \frac{  n + 1}{24} \right) q^\frac n{24} \pmod\ell.
\end{gather}
From the discussion in the Introduction, we have $g_\ell\not\equiv 0\pmod\ell$, and 
\begin{gather*}
f_\ell\equiv 0\pmod\ell \iff \ \ell=5, 7, 11.
\end{gather*}
From the discussion above,  each Shimura lift of $f_\ell$ is in the space 
\begin{gather*}
S_{\ell-3}^{\rm new}\(6, -\pmfrac{8}{-\ell}, -\pmfrac{12}{-\ell}\)\otimes \pmfrac{12}\bullet,
\end{gather*}
and  each Shimura lift of $g_\ell$ is in the space 
\begin{gather}\label{eq:shimg}
S_{\ell^2-3}^{\rm new}\(6, -1, -1\)\otimes \pmfrac{12}\bullet.
\end{gather}

\subsection{Modular Galois representations}

Let $k$ and $N$ be positive integers with $k$ even and $N$ coprime to $\ell$. 
We recall properties of the Galois representations attached to eigenforms in $S_k(N)$ that will be used in the next section. 
We recall that $\ell \ge 5$.

We let $\Qbar \subseteq \C$ be the algebraic closure of $\Q$ in $\C$. 
For each prime $p$, we fix an algebraic closure $\Qbar_p$ of $\Q_p$ and an embedding $\iota_p \colon \Qbar \hookrightarrow \Qbar_p$. 
Via $\iota_p$, we view $G_p := \Gal(\Qbar_p/\Q_p)$ as a subgroup of $G_\Q := \Gal(\Qbar/\Q)$. 
We let $I_p \subseteq G_p$ denote the inertia subgroup and let $\Frob_p \in G_p/I_p$ denote the arithmetic Frobenius. 
We view the coefficients of any $f \in S_k(N)$ as elements of $\Qbar_\ell$ via $\iota_\ell$. 

We denote by $\chi \colon G_{\Q} \to \Z_{\ell}^{\times}$ (resp. $\omega \colon G_{\Q} \to \F_\ell^\times$) the $\ell$-adic (resp. the mod $\ell$) cyclotomic character, and similarly with $G_{\Q}$ replaced by $G_K$ for $K/\Q$ finite or by $G_p$ with $p$ a prime, etc. 
We let $\omega_2, \omega'_2 \colon I_\ell \to \F_{\ell^2}^\times$ denote Serre's fundamental characters of level $2$ \cite[\S2.1]{serre-duke}.
These are characters of order $\ell^2 - 1$ with  $\omega_2^\ell = \omega_2'$, $\omega_2'^\ell = \omega_2$, and $\omega_2^{\ell+1} = \omega_2'^{\ell+1} = \omega$.

 The next theorem summarizes some important properties of modular Galois representations, and is due to many people, including Deligne, Fontaine, Langlands, Ribet, and Shimura. 
 See \cite[\S3.2.2]{hida-mod-forms-gal-cohom} and \cite[\S2]{edixhoven-weight} for more details and references. 
\begin{theorem}\label{gal-rep}
	Let $f = q + \sum_{n\ge 2} a_n q^n \in S_k(N)$ be a normalized Hecke eigenform.
	There is a continuous irreducible representation $\rho_f \colon G_{\Q} \to \GL_2(\Qbar_\ell)$ with semisimple mod $\ell$ reduction
	$\rhobar_f \colon G_{\Q} \to \GL_2(\Fbar_\ell)$
	satisfying the following properties. 
	\begin{enumerate}
		\item\label{gal-rep:unramified} If $p\nmid \ell N$, then $\rho_f$ is unramified at $p$ and the characteristic polynomial of $\rho_f(\Frob_p)$ is $X^2 - \iota_\ell(a_p) X + p^{k-1}$. This uniquely characterizes $\rho_f$. In particular, $\det\rho_f = \chi^{k-1}$.
		
		\item\label{gal-rep:at-q} If $q\mid N$ and $q^2 \nmid N$, then $\rho_f|_{I_q}$ is unipotent. In particular, the prime-to-$\ell$ Artin conductor $N(\rhobar_f)$ of $\rhobar_f$ is not divisible by $q^2$.
		
		\noindent If further $f$ is $q$-new, then $\rho_f|_{G_q}$ is an extension of $\psi$ by $\chi\psi$ where $\psi \colon G_q \to \Qbar_\ell^\times$ is the unramified character with $\psi(\Frob_q)=\iota_\ell(a_q)$.

		\item\label{gal-rep:at-l} Assume that $2 \le k \le \ell+1$.   Then
		\begin{itemize}
			\item If $\iota_\ell(a_\ell)$ is an $\ell$-adic unit, 
			then $\rho_f|_{G_\ell}$ is reducible and $\rho_f|_{I_\ell}$ is an extension of the trivial character by $\chi^{k-1}$.
			\item If $\iota_\ell(a_\ell)$ is not an $\ell$-adic unit, then $\rhobar_f|_{G_\ell}$ is irreducible and $\rhobar_f|_{I_\ell} \cong \omega_2^{k-1} \oplus \omega_2'^{(k-1)}$.
		\end{itemize}
	\end{enumerate}
\end{theorem}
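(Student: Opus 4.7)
The plan is to assemble this summary theorem from the classical construction of the Galois representation attached to a modular eigenform and then to read off each listed property from the corresponding local-global compatibility statement at each prime. First I would invoke the Eichler--Shimura--Deligne construction: given the normalized eigenform $f \in S_k(N)$ with $\ell \nmid N$, one obtains $\rho_f$ as a subquotient of the $\ell$-adic \'etale cohomology of an appropriate Kuga--Sato variety over $X_1(N)$ (or of $J_1(N)$ when $k = 2$). The Eichler--Shimura congruence then yields that $\rho_f(\Frob_p)$ has characteristic polynomial $X^2 - \iota_\ell(a_p) X + p^{k-1}$ for $p \nmid \ell N$, which is part (\ref{gal-rep:unramified}); irreducibility is Ribet's theorem, and the uniqueness claim follows from Chebotarev density combined with Brauer--Nesbitt, while the determinant assertion is read off from the characteristic polynomial.

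For part (\ref{gal-rep:at-q}) the key input is local-global compatibility at primes $q \neq \ell$, due to Carayol and Saito, which identifies $\rho_f|_{G_q}$ (up to Frobenius semisimplification and a Tate twist) with the local Weil--Deligne parameter of the automorphic representation $\pi_f$ attached to $f$. When $q \mid N$ but $q^2 \nmid N$, the local factor $\pi_{f,q}$ is either a ramified principal series or a twist of Steinberg, both of which have unipotent monodromy; this yields both the unipotence of $\rho_f|_{I_q}$ and the stated bound on the prime-to-$\ell$ Artin conductor. If in addition $f$ is $q$-new, then $\pi_{f,q}$ is forced to be a twist of Steinberg by an unramified character, and the standard recipe produces $\rho_f|_{G_q}$ as the asserted extension of $\psi$ by $\chi\psi$ with $\psi(\Frob_q) = \iota_\ell(a_q)$.

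Part (\ref{gal-rep:at-l}) is the most delicate and is where I expect the main work to lie. Since $\ell \nmid N$, the representation $\rho_f|_{G_\ell}$ is crystalline with Hodge--Tate weights $\{0, k-1\}$. In the ordinary case, where $\iota_\ell(a_\ell)$ is an $\ell$-adic unit, a theorem of Deligne (with refinements by Mazur--Wiles and Wiles) shows that $\rho_f|_{G_\ell}$ is reducible with an unramified quotient, and the determinant from part (\ref{gal-rep:unramified}) then forces $\rho_f|_{I_\ell}$ to be an extension of the trivial character by $\chi^{k-1}$. In the non-ordinary case, Fontaine--Laffaille theory (for $k \le \ell$), supplemented by a direct analysis at the boundary $k = \ell + 1$ via Breuil modules or Edixhoven's weight optimization, shows that $\rhobar_f|_{G_\ell}$ is irreducible with inertial restriction isomorphic to $\omega_2^{k-1} \oplus \omega_2'^{(k-1)}$. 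The main obstacle is carrying out this $\ell$-adic Hodge-theoretic computation uniformly across the allowed weight range and extracting the described inertia structure from the filtered-module data; fortunately this is precisely what is done in the references \cite[\S3.2.2]{hida-mod-forms-gal-cohom} and \cite[\S2]{edixhoven-weight} cited in the statement, so at the level of a plan my task reduces to tracking the relevant formulas through those references.
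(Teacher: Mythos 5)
The paper does not actually prove Theorem~\ref{gal-rep}; it states it as a summary of standard facts due to Deligne, Fontaine, Langlands, Ribet, Shimura, and others, and simply refers the reader to the sections of Hida and Edixhoven cited in the surrounding text. Your sketch---the Eichler--Shimura--Deligne construction with Ribet's irreducibility and Chebotarev plus Brauer--Nesbitt for part~\ref{gal-rep:unramified}, Carayol--Saito local-global compatibility at $q \ne \ell$ (Steinberg in the $q$-new case) for part~\ref{gal-rep:at-q}, and the ordinary theory of Deligne/Mazur--Wiles together with Fontaine--Laffaille (and the boundary weight $k=\ell+1$) for part~\ref{gal-rep:at-l}---is a correct and standard account of how those facts are established, and it is exactly the content of the references the paper points to, so your plan is consistent with the paper's (purely citational) treatment.
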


Although we have suppressed it from the notation, we note that the representations $\rho_f$ and $\rhobar_f$ do depend on the choice of embedding $\iota_\ell \colon \Qbar \hookrightarrow \Qbar_\ell$, at least up to the prime that it determines in the coefficient field $\Q(\{a_n\}_{n\ge 1}) \subseteq \Qbar$ of $f$.  

\subsection{Congruences for non-eigenforms}\label{sec:cong-non-eig}
We will ultimately be interested in congruences for cusp forms which are not necessarily eigenforms. 
To do so, we record two lemmas  which allow us to bootstrap from the case of newforms to the general case.

Let $k$ and $N$ be positive integers with $k$ even and $N$ coprime to $\ell$. 
Let $f_1, \ldots, f_n \in S_k^{\operatorname{new}}(N)$ and write
\begin{gather}\label{eq:lincombnewform}
	f_j=\sum_{i=1}^d c_{i, j} g_i,
\end{gather}
with newforms $g_1,\ldots,g_d \in S_k^{\operatorname{new}}(N)$  and $c_{i,j} \in \Qbar$. 
Let $E$ be a number field which contains
the coefficients of each $g_i$ as well as all of the coefficients $c_{i, j}$.
Fix a prime  $\lambda$  of $E$ over $\ell$ and let $e$ be its ramification index. 
Define
\begin{gather}\label{eq:mfs}
	m(f_1,\ldots,f_n) := \max(0,  - \min(\ord_{\lambda}(c_{i, j}))).
\end{gather}
With this notation we have the following.
\begin{lemma} \label{lem:congruence} 
	Let the notation be as above and let $m \ge 1$ be an integer.
	Assume that there is a prime $Q \nmid N\ell$, an integer $a$, and an integer $r \ge 1$ such that 
	$g_i\big |T(Q)\equiv a g_i\bmod\lambda^r$ for each $1\le i \le d$. 
	If $r \ge m + m(f_1,\ldots,f_n)$, then for  $1\le j \le n$ we have
	\begin{gather*}
		f_j\big | T(Q)\equiv a f_j \mod \lambda^m.
	\end{gather*}
	If further $f_1,\ldots,f_n \in S_k^{\operatorname{new}}(N,\mathbb{Z})$ and $r \ge em + m(f_1,\ldots,f_n)$ then for  $1\le j \le n$ we have
	\begin{gather*}
		f_j\big | T(Q)\equiv a f_j \pmod {\ell^m}.
	\end{gather*}
\end{lemma}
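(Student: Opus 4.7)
The plan is to exploit $\Q$-linearity of $T(Q)$ together with the newform congruence hypothesis. Starting from the decomposition \eqref{eq:lincombnewform}, I would write
\[
  f_j\big|T(Q) - a f_j = \sum_{i=1}^d c_{i,j}\bigl(g_i\big|T(Q) - a g_i\bigr).
\]
By hypothesis every Fourier coefficient of $g_i\big|T(Q) - a g_i$ has $\lambda$-valuation at least $r$, while by the definition \eqref{eq:mfs} of $m(f_1,\ldots,f_n)$ each $c_{i,j}$ satisfies $\ord_\lambda(c_{i,j}) \geq -m(f_1,\ldots,f_n)$. Summing, every Fourier coefficient of $f_j\big|T(Q) - a f_j$ has $\lambda$-valuation at least $r - m(f_1,\ldots,f_n) \geq m$, establishing the first assertion.

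For the second assertion, I would note that $T(Q)$ preserves $\Z$-integrality of Fourier coefficients (since $Q \nmid N\ell$, the operator is given on $q$-expansions by a formula with integer coefficients), so the coefficients of $f_j\big|T(Q) - a f_j$ already lie in $\Z$. The key arithmetic input is then that for $x \in \Z$ the relation $\ord_\lambda(x) = e \cdot v_\ell(x)$ holds, because $\lambda$ lies above $\ell$ with ramification index $e$. Under the stronger hypothesis $r \geq em + m(f_1,\ldots,f_n)$, the bound from the first part upgrades to $\ord_\lambda \geq em$ on each integer coefficient, which is exactly $\ell^m$-divisibility.

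The lemma is essentially a bookkeeping exercise, and I do not anticipate a substantial obstacle. The only mild subtlety is correctly tracking the ramification index $e$ when transferring a congruence modulo $\lambda^r$ in the coefficient field $E$ back to a congruence modulo $\ell^m$ in $\Z$; this is precisely what forces the factor $e$ to appear in the hypothesis of the second statement. In particular, no deep input from the theory of modular forms or Galois representations is required here — the content is purely a comparison of valuations on two lattices (the one spanned by the newforms $g_i$ and the one containing the $f_j$).
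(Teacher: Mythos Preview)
Your proposal is correct and is essentially the paper's approach: the paper's proof reads in its entirety ``This follows immediately from the definitions of $m(f_1,\ldots,f_n)$ and ramification index,'' and you have simply unpacked those two ingredients explicitly.
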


\begin{proof}
	This follows immediately from the definitions of $m(f_1,\ldots,f_n)$ and ramification index.
\end{proof}

\begin{lemma}\label{lem:from-lambda-to-ell}
	Let $m \ge 1$ be an integer. 
	Let $Q \nmid N\ell$ be prime,  let $a$  be an integer, and let $r \ge 1$ be an integer such that $g_i\big |T(Q)\equiv a g_i\bmod{\lambda^r}$ for every newform $g \in S_k^{\operatorname{new}}(N)$. 
	There is an integer $c \ge 0$, depending on $\ell$, $k$, and $N$ but not on $Q$, $a$, or $m$, such that if $r \ge em + c$, then 
	\begin{gather*}
		f\big |T(Q)\equiv a f\pmod {\ell^m}
	\end{gather*}
	for all $f \in S_k^{\operatorname{new}}(N)$.
\end{lemma}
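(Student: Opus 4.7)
The plan is to bootstrap from newforms to general elements of the newspace by fixing once and for all a convenient $\Z_{(\ell)}$-integral basis of $S_k^{\operatorname{new}}(N)$ and then invoking Lemma~\ref{lem:congruence}. Since $S_k^{\operatorname{new}}(N,\C)$ is stable under $\Gal(\Qbar/\Q)$, the subspace $S_k^{\operatorname{new}}(N,\Q):=S_k^{\operatorname{new}}(N,\C)\cap S_k(N,\Q)$ is a $\Q$-vector space containing the $q$-expansion lattice $S_k^{\operatorname{new}}(N,\Z)$, and its localization $S_k^{\operatorname{new}}(N,\Z_{(\ell)})$ is a free $\Z_{(\ell)}$-module of finite rank. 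Pick a $\Z_{(\ell)}$-basis $f_1,\ldots,f_n\in S_k^{\operatorname{new}}(N,\Z)$ of this module. Let $g_1,\ldots,g_d$ enumerate the Galois conjugates of the newforms in $S_k^{\operatorname{new}}(N)$, write $f_j=\sum_i c_{i,j}g_i$, let $E$ be a number field containing the coefficients of every $g_i$ and every $c_{i,j}$, fix a prime $\lambda$ of $E$ above $\ell$ of ramification index $e$, and set $c:=m(f_1,\ldots,f_n)$ as in \eqref{eq:mfs}. Because the basis is chosen once and for all, $c$ depends only on $\ell$, $k$, and $N$.

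By hypothesis $g_i\big|T(Q)\equiv ag_i\pmod{\lambda^r}$ for each $i$, so when $r\ge em+c$ the second assertion of Lemma~\ref{lem:congruence} gives $f_j\big|T(Q)\equiv af_j\pmod{\ell^m}$ for every $j=1,\ldots,n$. To pass from this basis to an arbitrary $f\in S_k^{\operatorname{new}}(N)$, let $\cO_{(\ell)}\subset\Qbar$ denote the ring of algebraic numbers integral at all primes above $\ell$. Since $S_k^{\operatorname{new}}(N,\C)$ is defined over $\Q$ and $\cO_{(\ell)}$ is flat over $\Z_{(\ell)}$, we have
$$S_k^{\operatorname{new}}(N) \;=\; S_k^{\operatorname{new}}(N,\Z_{(\ell)})\otimes_{\Z_{(\ell)}}\cO_{(\ell)},$$
so $f_1,\ldots,f_n$ is also an $\cO_{(\ell)}$-basis of $S_k^{\operatorname{new}}(N)$. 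Writing $f=\sum_j a_j f_j$ with $a_j\in\cO_{(\ell)}$ yields
$$f\big|T(Q)-af \;=\; \sum_j a_j\bigl(f_j\big|T(Q)-af_j\bigr),$$
and each summand on the right has coefficients in $\ell^m\cO_{(\ell)}$ (since $a_j$ is $\ell$-integral and the factor in parentheses already satisfies the congruence modulo $\ell^m$). This gives $f\big|T(Q)\equiv af\pmod{\ell^m}$.

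The real content of the argument lies in producing an $\ell$-integral basis of the newspace for which the constant $c$ in \eqref{eq:mfs} is uniform in $f$; once this is in hand, the flatness of $\cO_{(\ell)}$ over $\Z_{(\ell)}$ handles the enlargement of the coefficient ring, and Lemma~\ref{lem:congruence} does the rest. I do not anticipate any serious obstacle beyond this bookkeeping.
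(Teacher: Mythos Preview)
Your argument is correct and follows the same outline as the paper's: fix a $\Z$-integral basis $f_1,\ldots,f_n$ of the new space, apply Lemma~\ref{lem:congruence} to obtain the congruence for each $f_j$, and then extend to arbitrary $f\in S_k^{\operatorname{new}}(N)$ by writing it in this basis. The one difference is in the last step: the paper does not assert that $f_1,\ldots,f_n$ form an $\cO_{(\ell)}$-basis of $S_k^{\operatorname{new}}(N)$, and instead absorbs any possible denominators into an extra constant $c_0$, taking $c = ec_0 + m(f_1,\ldots,f_n)$; you prove the base-change identity $S_k^{\operatorname{new}}(N)=S_k^{\operatorname{new}}(N,\Z_{(\ell)})\otimes_{\Z_{(\ell)}}\cO_{(\ell)}$ directly, which lets you take $c_0=0$ and hence the sharper $c=m(f_1,\ldots,f_n)$. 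Your justification for that identity (``defined over $\Q$'' plus flatness) is a little compressed---the point is that $S_k^{\operatorname{new}}(N,\Z_{(\ell)})$ is saturated in $S_k(N,\Z_{(\ell)})$, so the quotient is torsion-free over the DVR $\Z_{(\ell)}$ and the intersection with the newspace commutes with $-\otimes\cO_{(\ell)}$---but the claim is true and the argument goes through.
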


\begin{proof}
	Let $A \subset \Qbar$ be the subring of elements that are integral at all primes above $\ell$ (so $S_k^{\operatorname{new}}(N) = S_k^{\operatorname{new}}(N, A)$).
	Let $h_1,\ldots,h_d $ generate $S_k^{\operatorname{new}}(N)$ over $A$ and let $f_1,\ldots,f_n$ be a basis for $S_k^{\operatorname{new}}(N, \Z)$ (in fact we can take $n = d$).
	Since $S_k^{\operatorname{new}}(N, \Z)$ generates $S_k^{\operatorname{new}}(N, \Qbar)$ over $\Qbar$, we can write 
	\begin{gather}
		h_j=\sum_{i=1}^n d_{i,j} f_i,
	\end{gather}
	with $d_{i,j} \in \Qbar$. 
	Choose $c_0 \ge 0$ such that $\ell^{c_0} d_{i,j} \in A$ for each $i,j$. 
	To show that $f\big |T(Q)\equiv a f\pmod {\ell^m}$ for all $f \in S_k^{\operatorname{new}}(N)$, it suffices to show that $f_i\big | T(Q)\equiv a f_i \pmod {\ell^{m + c_0}}$ for each $1 \le i \le n$. 
	Setting $c = ec_0+  m(f_1,\ldots,f_n) $ with $m(f_1,\ldots,f_n)$ as in \eqref{eq:mfs}, the lemma now follows from Lemma~\ref{lem:congruence}.
\end{proof}

\begin{remark}\label{rmk:lambda-to-ell-epsilon}
	When $N$ is squarefree, the same result as Lemma~\ref{lem:from-lambda-to-ell} holds, replacing $S_k^{\operatorname{new}}(N)$ with $S_k^{\operatorname{new}}(N, \varepsilon)$. 
\end{remark}

\section{Congruences in low weight}
In this section, we use Galois representations together with some group theoretic arguments to prove Theorem \ref{thm:lowweight}, which implies Theorem \ref{thm:Atkin1}. 
The proof is by induction with the bulk of the work devoted to proving the base case, which is essentially the combination of Propositions~\ref{big-image} and~\ref{congruences}. 
A key technical lemma is Lemma~\ref{galois-extension}, which roughly states that the mod $\ell$ Galois representations associated to sufficiently distinct normalized eigenforms cut out sufficiently disjoint field extensions. 
This lemma and its proof are inspired by and similar to that of \cite[Lemma~7.1.5(3)]{10author}.

We recall the standing assumption that $\ell\geq 5$ is prime.
We again let $k$ and $N$ be positive integers with $k$ even and $N$ coprime to $\ell$, and further assume that $N$ is squarefree.
Recall that we have a fixed embedding $\iota_\ell \colon \Qbar \hookrightarrow \Qbar_\ell$ that allows us to view Fourier coefficients of modular forms in $S_k(N)$ as elements of $\Qbar_\ell$.  

The following two lemmas are consequences of Theorem \ref{gal-rep} and give us information on the image of the mod $\ell$ Galois representation associated to a normalized eigenform in $S_k(N)$.

\begin{lemma}\label{twist}
	Let $f,g \in S_{k}(N)$ be normalized eigenforms such that $\rhobar_g \cong \rhobar_f \otimes \eta$ for some nontrivial continuous character $\eta \colon G_{\Q} \to \Fbar_\ell^\times$.
	Then 
	\begin{enumerate}
		\item\label{twist:cyclotomic} $\eta = \omega^{\frac{\ell -1}{2}}$.
		\item\label{twist:nontrivial} Assume further that $k \le \ell-1$.
		Letting $a_\ell$ denote the $\ell$-th Fourier coefficient of $f$, we have
		\begin{itemize}
			\item If $\iota_\ell(a_\ell)$ is an $\ell$-adic unit, then $k = \frac{\ell +1}{2}$.
			\item If $\iota_\ell(a_\ell)$ is not an $\ell$-adic unit, then $k = \frac{\ell +3}{2}$.
		\end{itemize}
	\end{enumerate}
\end{lemma}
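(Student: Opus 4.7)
The plan is to pin down $\eta$ using global data, then use the local behaviour at $\ell$ to constrain $k$.

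For (1), taking determinants and using Theorem~\ref{gal-rep}\eqref{gal-rep:unramified} gives $\omega^{k-1} = \det\rhobar_g = \eta^2\det\rhobar_f = \eta^2\omega^{k-1}$, so $\eta^2 = 1$ and $\eta$ is quadratic. To identify $\eta$, I would show it is unramified outside $\ell$: away from $N\ell$ this is automatic, and at a prime $q \mid N$ Theorem~\ref{gal-rep}\eqref{gal-rep:at-q} (applicable since $N$ is squarefree) gives that both $\rhobar_f|_{I_q}$ and $\rhobar_g|_{I_q}$ are unipotent, so for $i \in I_q$ the eigenvalues of $(\rhobar_f \otimes \eta)(i) = \eta(i)\rhobar_f(i)$ are both $\eta(i)$ and must equal $1$. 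Hence $\eta|_{I_q}=1$. A nontrivial quadratic character of $G_\Q$ unramified outside $\ell$ cuts out the unique quadratic subfield of $\Q(\zeta_\ell)$, forcing $\eta = \omega^{(\ell-1)/2}$.

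For (2), I would split on the ordinarity of $f$ at $\ell$ using Theorem~\ref{gal-rep}\eqref{gal-rep:at-l}. The key observation is that twisting by a character preserves reducibility (resp.\ irreducibility) of $\rhobar_?|_{G_\ell}$, so $g$ lies in the same ordinarity case as $f$. In the ordinary case, the diagonal characters of both $\rhobar_f|_{I_\ell}$ and $\rhobar_g|_{I_\ell}$ form the multiset $\{1, \omega^{k-1}\}$, while those of $(\rhobar_f \otimes \eta)|_{I_\ell}$ are $\{\eta|_{I_\ell}, \omega^{k-1}\eta|_{I_\ell}\}$. Since $\omega^{(\ell-1)/2}|_{I_\ell}$ has order $2$, it is nontrivial, so matching multisets forces $\omega^{k-1}|_{I_\ell} = \omega^{(\ell-1)/2}|_{I_\ell}$; the range $1 \le k-1 \le \ell - 2$ then pins down $k = (\ell+1)/2$. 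In the non-ordinary case, $\rhobar_f|_{I_\ell} \cong \omega_2^{k-1} \oplus \omega_2'^{k-1}$ and likewise for $g$; ruling out the branch $\eta|_{I_\ell} = 1$, matching yields $\eta|_{I_\ell} = \omega_2^{(\ell-1)(k-1)}$. Using $\omega_2^{\ell+1} = \omega$ on $I_\ell$, the equation becomes $(\ell-1)(k-1) \equiv (\ell^2-1)/2 \pmod{\ell^2-1}$, equivalently $k - 1 \equiv (\ell+1)/2 \pmod{\ell+1}$, and the constraint $k \le \ell - 1$ then forces $k = (\ell+3)/2$.

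The main bookkeeping happens with the level-$2$ fundamental characters in the non-ordinary case, but this reduces cleanly to one modular arithmetic step using the standard identities $\omega_2^\ell = \omega_2'$ and $\omega_2^{\ell+1} = \omega$ on $I_\ell$. No input beyond Theorem~\ref{gal-rep} and the classification of quadratic characters unramified outside $\ell$ is required.
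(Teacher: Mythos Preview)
Your proof is correct and follows essentially the same approach as the paper's: for part~(1) you use the determinant to see $\eta$ is quadratic and the unipotency of $\rhobar_f|_{I_q}$, $\rhobar_g|_{I_q}$ at $q\mid N$ to show $\eta$ is unramified outside $\ell$, and for part~(2) you compare the inertial characters at $\ell$ given by Theorem~\ref{gal-rep}\eqref{gal-rep:at-l}. The only differences are cosmetic --- you swap the order of the two steps in part~(1), and you make explicit (via preservation of irreducibility under twist) why $f$ and $g$ lie in the same ordinarity case, a point the paper leaves implicit.
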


\begin{proof}
	To prove that $\eta = \omega^i$ for some $1 \le i \le \ell - 2$, it suffices to show that $\eta$ is unramified outside of $\ell$. 
	Take any prime $p \ne \ell$. 
	Since we are assuming that $N$ is squarefree, parts~\ref{gal-rep:unramified} and~\ref{gal-rep:at-q} of Theorem~\ref{gal-rep} imply that 
	\begin{gather*} \rhobar_f|_{I_p} \cong \begin{pmatrix} 1 & \ast \\ & 1 \end{pmatrix},\end{gather*}
	and similarly for $\rhobar_g|_{I_p}$. 
	Since $\rhobar_g \cong \rhobar_f \otimes \eta$, we must have $\eta|_{I_p} = 1$. 
	To see that $i = \frac{\ell-1}{2}$, we use that
		\begin{gather*} \det\rhobar_f = \omega^{k-1} = \det\rhobar_g = \eta^2\det\rhobar_f, \end{gather*}
	so $\eta = \omega^i$ is quadratic.
	
	For part~\ref{twist:nontrivial}, we use part~\ref{gal-rep:at-l} of Theorem~\ref{gal-rep}. 
	If $\iota_\ell(a_\ell)$ is an $\ell$-adic unit, it implies that $\omega^{k-1} = \omega^{\frac{\ell-1}{2}}$, so $k = \frac{\ell+1}{2}$. 
	If $\iota_\ell(a_\ell)$ is not an $\ell$-adic unit, it implies that 
		\begin{gather*} \omega_2^{k-1} = (\omega_2')^{k-1}\omega^{\frac{\ell-1}{2}} = \omega_2^{\ell(k-1) + \frac{\ell^2-1}{2}}. \end{gather*}
	Since $\omega_2$ has order $\ell^2-1$, we obtain $k = \frac{\ell+3}{2}$.
\end{proof}

\begin{lemma}\label{irred}
	Let $f = q + \sum_{n\ge 2} a_n q^n \in S_k(N)$ be a normalized eigenform. 
	Assume that $2 \le k \le \ell-1$ and that there is a prime $q \mid N$ such that $f$ is $q$-new and $q^{k-1} \not\equiv q^{\pm 1} \pmod \ell$. 
	Then the following are true.
	\begin{enumerate}
		\item\label{irred:not-eis} $\rhobar_f$ is irreducible.
		\item\label{irred:not-cm} Assume there is a quadratic extension $K/\Q$ such that $\rhobar_f|_{G_K}$ is reducible. 
		Then $\rhobar_f \cong \rhobar_f \otimes \omega^{\frac{\ell-1}{2}}$ and
		\begin{itemize}
			\item If $\iota_\ell(a_\ell)$ is an $\ell$-adic unit, then $k = \frac{\ell+1}{2}$.
			\item If $\iota_\ell(a_\ell)$ is not an $\ell$-adic unit, then $k = \frac{\ell+3}{2}$.
		\end{itemize}
	\end{enumerate}
\end{lemma}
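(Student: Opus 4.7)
For part~(1), my plan is to argue by contradiction: assume $\rhobar_f$ is reducible and hence, being semisimple by convention, decomposes as $\rhobar_f \cong \phi_1 \oplus \phi_2$ with characters $\phi_i \colon G_{\Q} \to \Fbar_{\ell}^{\times}$ satisfying $\phi_1\phi_2 = \omega^{k-1}$. I would split on whether $\iota_\ell(a_\ell)$ is an $\ell$-adic unit. In the non-unit case, part~\ref{gal-rep:at-l} of Theorem~\ref{gal-rep} gives $\rhobar_f|_{I_\ell} \cong \omega_2^{k-1} \oplus \omega_2'^{k-1}$. For this to arise as the restriction of a sum of two characters of $G_\ell$, each summand on $I_\ell$ would have to be Frobenius-invariant (i.e., fixed by raising to the $\ell$th power); but since $\omega_2^\ell = \omega_2'$ and $\omega_2$ has order $\ell^2-1$, this forces $(\ell+1)\mid(k-1)$, which is impossible under $2 \le k \le \ell - 1$.

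In the unit case, the same theorem gives $\rhobar_f|_{I_\ell}^{\mathrm{ss}} \cong 1 \oplus \omega^{k-1}$, so after relabeling we may assume $\phi_1|_{I_\ell} = 1$ and $\phi_2|_{I_\ell} = \omega^{k-1}$. For each prime $p\ne \ell$ dividing $N$, squarefreeness of $N$ forces the Artin conductor of $\phi_1 \oplus \phi_2$ at $p$ to divide $p$, so at most one of $\phi_1,\phi_2$ is ramified at $p$; combined with $\phi_1\phi_2 = \omega^{k-1}$ being unramified at $p$ (so $\phi_1|_{I_p}$ and $\phi_2|_{I_p}$ are mutually inverse), both must be unramified at $p$. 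Hence $\phi_1$ is everywhere unramified, and therefore trivial by class field theory over $\Q$, whence $\phi_2 = \omega^{k-1}$. Now at $q$, part~\ref{gal-rep:at-q} of Theorem~\ref{gal-rep} implies that the semisimplification of $\rhobar_f|_{G_q}$ has Frobenius eigenvalues $\bar a_q$ and $q\bar a_q$, while our decomposition gives $1$ and $q^{k-1}$. Matching these multisets forces either $\bar a_q = 1$ (so $q^{k-1}\equiv q \pmod\ell$) or $\bar a_q = q^{k-1}$ (so $q^k \equiv 1$, i.e., $q^{k-1} \equiv q^{-1} \pmod\ell$). Both contradict the hypothesis $q^{k-1} \not\equiv q^{\pm 1}\pmod\ell$.

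For part~(2), assume $\rhobar_f|_{G_K}$ is reducible, and let $\tau\colon G_\Q \to \{\pm 1\}$ be the quadratic character cutting out $K$. Write $\rhobar_f|_{G_K} \cong \phi_1 \oplus \phi_2$. The case $\phi_1 = \phi_2$ is excluded because it would make $\rhobar_f(G_K)$ consist of scalars, forcing the projective image of $\rhobar_f$ to have order at most $2$ and hence $\rhobar_f$ reducible, contradicting part~(1). So $\phi_1\ne \phi_2$, and the nontrivial element of $\Gal(K/\Q)$ must interchange them (otherwise each $\phi_i$ would extend to a character of $G_\Q$ and $\rhobar_f$ would again be reducible). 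Therefore $\rhobar_f \cong \mathrm{Ind}_{G_K}^{G_\Q}(\phi_1)$, and the standard property of induction by a character trivial on the subgroup gives $\rhobar_f \otimes \tau \cong \rhobar_f$. Applying Lemma~\ref{twist} with $g=f$ and $\eta = \tau$ (which is nontrivial) then yields $\tau = \omega^{(\ell-1)/2}$ and, according to whether $\iota_\ell(a_\ell)$ is an $\ell$-adic unit, the stated conditions on $k$.

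The hardest part I anticipate is the non-unit case at $\ell$ in part~(1): everything hinges on the fact that Serre's level-$2$ fundamental characters do not extend to characters of $G_\ell$ in our weight range. This is standard, but it is the crucial input needed to rule out global reducibility purely from the local structure of $\rhobar_f$ at $\ell$, without any further use of the $q$-newness hypothesis.
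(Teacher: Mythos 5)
Your proof is correct and follows essentially the same path as the paper: constrain the would-be characters $\phi_1,\phi_2$ to powers of $\omega$ using squarefreeness of $N$ and the local structure at $\ell$, then derive a contradiction at $q$ from Theorem~\ref{gal-rep}\eqref{gal-rep:at-q}; and for the dihedral case, reduce to Lemma~\ref{twist} via induction. The only cosmetic difference is that in the non-unit case of part~(1) you rule out reducibility by noting $\omega_2^{k-1}$ is not Frobenius-stable, whereas the paper can appeal directly to the irreducibility of $\rhobar_f|_{G_\ell}$ stated in the same clause of Theorem~\ref{gal-rep}\eqref{gal-rep:at-l}; both are immediate.
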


\begin{proof}
To establish  part \ref{irred:not-eis}, 
	assume that $\rhobar_f \cong \psi_1 \oplus \psi_2$ for characters $\psi_i \colon G_{\Q} \to \Fbar_\ell^\times$. 
	Since $N$ is squarefree, $N(\rhobar_f) \mid N$ is also squarefree and for every prime $p \ne \ell$, at most one of $\psi_1,\psi_2$ is ramified at $p$ (see \cite[\S1.1]{carayol-modl}, for example). 
	On the other hand, $\det\rhobar_f = \omega^{k-1}$, so $\psi_1\psi_2$ is unramified at all primes $p \ne \ell$. 
	It follows that $\psi_1 = \omega^a$ and $\psi_2 = \omega^b$ for some $0 \le a,b \le \ell-2$. 
	Reordering if necessary, part~\ref{gal-rep:at-l} of Theorem~\ref{gal-rep} implies that $\psi_1 = \omega^{k-1}$ and $\psi_2 = 1$.
	This together with our assumption on $q$ contradicts part~\ref{gal-rep:at-q} of Theorem~\ref{gal-rep}.
	
	We turn to part~\ref{irred:not-cm}. 
	Since $\rhobar_f$ is irreducible we must have  $\rhobar_f|_{G_K} \cong \psi_1 \oplus \psi_2$ for nontrivial characters $\psi_1,\psi_2 \colon G_K \to \Fbar_\ell^\times$ which are permuted by $\Gal(K/\Q)$. 
	It follows that $\rhobar_f$ is the induction of a character $\psi \colon G_K \to \Fbar_\ell^\times$ and letting $\eta$ be the quadratic character of $K/\Q$, that $\rhobar_f \cong \rhobar_f \otimes \eta$. 
	We then apply part~\ref{twist:nontrivial} of Lemma~\ref{twist}.
\end{proof}

We apply the preceding two lemmas to our particular space of interest $S_{\ell-3}^{\mathrm{new}}(6)$.

\begin{proposition}\label{big-image}
Recall that $\ell \ge 5$. For any newform $f \in S_{\ell-3}^{\mathrm{new}}(6)$, the image of $\rhobar_f$ contains a conjugate of $\SL_2(\F_\ell)$.
\end{proposition}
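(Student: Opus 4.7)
The plan is to apply Dickson's classification to the projective image $\bar G$ of $\rhobar_f$: any finite subgroup of $\PGL_2(\bar\F_\ell)$ is either (a) contained in a Borel (equivalently, $\rhobar_f$ reducible), (b) dihedral (equivalently, $\rhobar_f$ induced from a quadratic field), (c) one of the exceptional groups $A_4, S_4, A_5$, or (d) contains $\PSL_2(\F_{\ell^r})$ for some $r \geq 1$. In case (d), since $\PSL_2(\F_\ell) \subseteq \PSL_2(\F_{\ell^r})$ is a non-abelian simple group for $\ell \geq 5$, a standard commutator argument shows that (after conjugation) the image of $\rhobar_f$ contains $\SL_2(\F_\ell)$. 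So it suffices to rule out (a), (b), (c).

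For (a), I apply Lemma~\ref{irred}(\ref{irred:not-eis}) with $q \in \{2,3\}$. Since $k-1 = \ell - 4$, Fermat's little theorem gives $q^{k-1} \equiv q^{-3} \pmod\ell$, so the hypothesis $q^{k-1} \not\equiv q^{\pm 1}$ amounts to $q^2 \not\equiv 1$ and $q^4 \not\equiv 1 \pmod\ell$. For $q = 2$ both conditions hold whenever $\ell \geq 7$; the boundary case $\ell = 5$ is vacuous since $S_2^{\operatorname{new}}(\Gamma_0(6)) = 0$. For (b), I apply Lemma~\ref{irred}(\ref{irred:not-cm}): the dihedral case would force $k \in \{(\ell+1)/2, (\ell+3)/2\}$, and setting $k = \ell - 3$ leaves only $\ell = 7$. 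At $\ell = 7$ the space $S_4^{\operatorname{new}}(\Gamma_0(6))$ is one-dimensional, spanned by a specific eta product, and I would verify the absence of any inner quadratic twist directly by exhibiting a nonzero Fourier coefficient $a_p$ at a prime $p$ that is inert in every imaginary quadratic field of conductor dividing $6$.

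For (c), the groups $A_4, S_4, A_5$ have all element orders bounded by $5$, so it suffices to exhibit an element of order $>5$ in $\bar G$. In the supersingular case at $\ell$, $\rhobar_f|_{I_\ell}$ is a direct sum of fundamental characters of level $2$ raised to the $(k-1)$-th power, projecting to a cyclic subgroup of order $(\ell+1)/\gcd(\ell+1, k-1) = (\ell+1)/\gcd(\ell+1, 5)$, which exceeds $5$ for all $\ell \geq 11$. In the ordinary case, a non-split inertia extension contributes a unipotent of order $\ell \geq 7$, while a split extension produces a diagonal element of projective order $(\ell-1)/\gcd(\ell-1, 3)$, exceeding $5$ for $\ell \geq 17$. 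The finitely many remaining small primes $\ell \in \{7, 11, 13\}$ are handled by computing Frobenius traces $a_p$ at a few small $p$ directly on the newforms in $S_{\ell-3}^{\operatorname{new}}(\Gamma_0(6))$ and checking that the resulting projective orders of $\rhobar_f(\Frob_p)$ are incompatible with $A_4, S_4, A_5$.

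The main obstacle is the small-prime corner of the argument, particularly $\ell = 7$, where $k = 4 = (\ell+1)/2$ so Lemma~\ref{irred}(\ref{irred:not-cm}) does not automatically exclude the dihedral case, and where the generic inertia-order bounds used for (c) are too weak. These remaining cases reduce to explicit finite computations on specific low-dimensional newform spaces, so they are tractable but require some casework outside the clean Galois-theoretic argument.
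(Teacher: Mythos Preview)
Your overall strategy---Dickson's classification followed by ruling out the reducible, dihedral, and exceptional cases via Lemma~\ref{irred} and inertia-at-$\ell$ orders---is exactly the paper's approach. However, two of your case analyses contain genuine errors.

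First, your supersingular bound is wrong at $\ell = 19$. You claim $(\ell+1)/\gcd(\ell+1,5) > 5$ for all $\ell \ge 11$, but for $\ell = 19$ this is $20/5 = 4$. So if some newform $f \in S_{16}^{\mathrm{new}}(6)$ were supersingular at $19$, your inertia argument would not exclude the exceptional case, and $\ell = 19$ is not in your list $\{7,11,13\}$ of primes to handle computationally. The paper notices this and resolves it by checking that each of the three newforms in $S_{16}^{\mathrm{new}}(6)$ has $a_{19} \not\equiv 0 \pmod{19}$, so they are all ordinary; the ordinary bound $18/\gcd(18,3) = 6$ then suffices. (Incidentally, $\ell = 11$ needs no special treatment: both the ordinary bound $10$ and the supersingular bound $12$ already exceed $5$.)

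Second, your $\ell = 7$ dihedral argument targets the wrong quadratic fields. Lemma~\ref{irred}\eqref{irred:not-cm} (via Lemma~\ref{twist}) tells you that in the dihedral case $\rhobar_f \cong \rhobar_f \otimes \omega^{(\ell-1)/2}$; for $\ell = 7$ this is the quadratic character cutting out $\Q(\sqrt{-7})$, of conductor $7$, not a field of conductor dividing $6$. Checking $a_p$ at primes inert in quadratic fields of conductor dividing $6$ proves nothing. The paper instead takes $p = 5$, notes $\omega^3(\Frob_5) = -1$, and checks that $a_5 = 6 \not\equiv 0 \pmod 7$ for the unique newform in $S_4^{\mathrm{new}}(6)$.
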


\begin{proof}
We first note that $S_{2}(6) = \{0\}$, so we may assume that $\ell \ge 7$. 

By \cite[Theorem~2.47(b)]{darmon-diamond-taylor}, there are four possibilities for the image of $\rhobar_f$: 
\begin{enumerate}
	\item $\rhobar_f$ is reducible.
	\item $\rhobar_f$ is dihedral, i.e. $\rhobar_f$ is irreducible but $\rhobar_f|_{G_K}$ is reducible for some quadratic $K/\Q$.
	\item $\rhobar_f$ is exceptional, i.e. the projective image of $\rhobar_f$ is conjugate to one of $A_4$, $S_4$, or $A_5$.
	\item The image of $\rhobar_f$ contains a conjugate of $\SL_2(\F_\ell)$.
\end{enumerate}
We rule out each of the first three possibilities in turn.

First, $2^{\ell - 4} \not\equiv 2^{\pm 1} \pmod \ell$ for any $\ell \ge 7$, so part~\ref{irred:not-eis} of Lemma~\ref{irred} shows that $\rhobar_f$ is irreducible. 
If further $\ell \ge 11$, then $\ell - 3 \ne \frac{\ell+1}{2}, \frac{\ell+3}{2}$, so part~\ref{irred:not-cm} of Lemma~\ref{irred} shows that $\rhobar_f$ is not dihedral. 
For $\ell=7$, the space $S_4^{\mathrm{new}}(6)$ is one-dimensional and  spanned by the newform 
\begin{gather}\label{lequals7} 
	f=q-2q^2-3q^3+4q^4+6q^5+6q^6-16q^7+\cdots 
\end{gather}
by \cite{lmfdb}.
If $\rhobar_f$ were dihedral, then part~\ref{irred:not-cm} of Lemma~\ref{irred} would imply that $\rhobar_f \cong \rhobar_f \otimes\omega^3$. 
Since $\omega^3(\Frob_5) = -1$, this would imply that $\operatorname{tr}\rhobar_f(\Frob_5) = 0$. 
But $a_5 = 6 \not\equiv 0 \pmod 7$, a contradiction.

It remains to rule out the exceptional case, and for this it suffices to show that the projective image contains an element of order $\ge 6$. 
Since the characters $\omega$ and $(\omega_2/\omega_2')$ have orders $\ell-1$ and $\ell+1$, respectively, 
the description of $\rhobar_f|_{I_\ell}$ in part~\ref{gal-rep:at-l} of Theorem~\ref{gal-rep} implies that the projective image of $\rhobar_f$ contains an element of order $\frac{\ell-1}{\gcd\{\ell-1, \ell-4\}}$ if $\iota_\ell(a_\ell)$ is a unit, and an element of order $\frac{\ell+1}{\gcd\{\ell+1, \ell-4\}}$ if $\iota_\ell(a_\ell)$ is not a unit. 
These are both $\ge 6$ if $\ell = 11$, $17$, or $\ell \ge 23$.
When $\ell = 19$, there are three newforms in $S_{16}^{\mathrm{new}}(6)$ with LMFDB labels 6.16.a.a, 6.16.a.b, and 6.16.a.c. 
The values of $a_\ell$ for these three newforms are $2163188180$, $4934015444$, and $-5895116260$, respectively. 
In each case $a_\ell \not\equiv 0 \pmod {19}$ and $\frac{18}{\gcd\{18,15\}} = 6$, so part~\ref{gal-rep:at-l} of Theorem~\ref{gal-rep} again shows that $\rhobar_f$ cannot be exceptional when $\ell = 19$.

For $\ell = 7,13$, we can rule out the possibility of exceptional image as follows. 
Say we have a newform $f = q + \sum_{n \ge 2} a_n q^n$ in $S_{\ell -3}^{\mathrm{new}}(6)$ and a prime $p \nmid 6\ell$ such that $a_p \in \Z$. 
Setting $u(p):=a_p^2/p^9$, 
if the projective image of $\rhobar_f$ is $A_4, S_4$, or $A_5$, then we have 
\begin{gather}\label{exceptional}
	u(p) \equiv 4,0,1,2 \pmod \ell \quad \text{or} \quad u(p)^2-3u(p)+1 \equiv 0 \pmod \ell,
\end{gather}	
depending on whether the image of $\rhobar_f(\Frob_p)$ in $\PGL_2(\overline{\F}_\ell)$ has order 1,2,3,4, or 5 (see for example \cite[p. 189]{Ribet}). 
When $\ell = 7$, one can check directly that \eqref{exceptional} is not satisfied when $p = 5$ for the unique newform $f \in S_{4}^{\mathrm{new}}(6)$ with Fourier expansion \eqref{lequals7} above. 
When $\ell = 13$, there is again a unique newform $f \in S_{10}^{\mathrm{new}}(6)$ and it has Fourier expansion 
\begin{gather*}f=q-16 q^2+81 q^3+256 q^4+2694 q^5+\cdots\end{gather*}
by \cite{lmfdb}. 
We can again check directly that \eqref{exceptional} is not satisfied when $p = 5$. 
\end{proof}

\begin{remark}\label{atkin1vs2rmk}
	The conclusion of Proposition~\ref{big-image} does not hold in general for the spaces $S_{\ell^2 - 3}^{\mathrm{new}}(6)$;
	this is the main reason that we are able to say more about Type I congruences than those of Type II.
\end{remark}

Next we will prove a few group theoretic lemmas (Lemmas \ref{disjoint-im}, \ref{simple-groups} and \ref{galois-extension}) leading to Proposition \ref{congruences}.
Before continuing we need to introduce some notation.
\begin{notation}\label{proj-and-twisted-reps}
	Let $G$ be a group and let $\tau \in \Gal(\Fbar_\ell/\F_\ell)$.
	\begin{itemize}
		\item For a homomorphism $\rhobar \colon G \to \GL_2(\overline{\F}_\ell)$, we write ${}^\tau\rhobar \colon G \to \GL_2(\Fbar_\ell)$ for the composite of $\rhobar$ with the automorphism $\GL_2(\Fbar_\ell) \to \GL_2(\Fbar_\ell)$ induced by $\tau \colon \Fbar_\ell \to \Fbar_\ell$.
		\item Similarly, for a homomorphism $r \colon G \to \PGL_2(\overline{\F}_\ell)$, we write ${}^\tau r \colon G \to \PGL_2(\Fbar_\ell)$ for the composite of $r$ with the automorphism $\PGL_2(\Fbar_\ell) \to \PGL_2(\Fbar_\ell)$ induced by $\tau \colon \Fbar_\ell \to \Fbar_\ell$.
		\item For two homomorphisms $r_1,r_2 \colon G \to \PGL_2(\Fbar_\ell)$, we write $r_1 \cong r_2$ if they are conjugate by an element of $\PGL_2(\Fbar_\ell)$.
		\item For a normalized eigenform $f \in S_k(N)$, we let $r_f \colon G_\Q \to \PGL_2(\Fbar_\ell)$ be the composite of $\rhobar_f \colon G_\Q \to \GL_2(\Fbar_\ell)$ with the projection $\GL_2(\Fbar_\ell) \to \PGL_2(\Fbar_\ell)$. 
	\end{itemize}
\end{notation}

\begin{lemma}\label{proj-im}
	Let $f,g \in S_{k}(N)$ be normalized eigenforms.
	If $r_f \cong r_g$ and $\rhobar_f \not\cong \rhobar_g$, then $\rhobar_f \cong \rhobar_g \otimes\omega^{\frac{\ell-1}{2}}$.
\end{lemma}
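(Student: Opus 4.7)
The plan is to lift the projective conjugacy $r_f \cong r_g$ to an honest twist of the two-dimensional representations and then invoke Lemma~\ref{twist}.

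First, since $r_f \cong r_g$, there exists $A \in \GL_2(\Fbar_\ell)$ such that, for every $\sigma \in G_\Q$, the images $A\rhobar_f(\sigma)A^{-1}$ and $\rhobar_g(\sigma)$ have the same image in $\PGL_2(\Fbar_\ell)$. Thus for each $\sigma$ there is a unique scalar $\eta(\sigma) \in \Fbar_\ell^\times$ with
\begin{gather*}
A\,\rhobar_f(\sigma)\,A^{-1} = \eta(\sigma)\,\rhobar_g(\sigma).
\end{gather*}
The uniqueness of $\eta(\sigma)$ together with the fact that $\rhobar_f$ and $\rhobar_g$ are homomorphisms forces $\eta \colon G_\Q \to \Fbar_\ell^\times$ to be a (continuous) character. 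Rearranging, we find that $\rhobar_g$ is isomorphic to the twist $\rhobar_f \otimes \eta^{-1}$.

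Next, our hypothesis $\rhobar_f \not\cong \rhobar_g$ implies that $\eta^{-1}$ is nontrivial. We may therefore apply part~\ref{twist:cyclotomic} of Lemma~\ref{twist} (with $f$ and $g$ in the roles of that lemma) to conclude that $\eta^{-1} = \omega^{(\ell-1)/2}$. Since this character is quadratic, $\eta = \omega^{(\ell-1)/2}$ as well, so
\begin{gather*}
\rhobar_f \cong \rhobar_g \otimes \omega^{(\ell-1)/2},
\end{gather*}
as desired. The only substantive step is the lift from projective to linear representations, and the only potential obstacle is verifying that the scalar-valued function $\eta$ is indeed a character — which is immediate from the multiplicativity of $\rhobar_f$ and $\rhobar_g$ and the uniqueness of the scalar in $\GL_2(\Fbar_\ell)$.
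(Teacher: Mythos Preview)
Your proof is correct and follows essentially the same route as the paper's own argument: lift the projective conjugacy to obtain a scalar-valued function, observe it is a character, and apply Lemma~\ref{twist}. The paper's version is slightly terser (it conjugates once so that $r_f = r_g$ and defines $\eta(\sigma) = \rhobar_f(\sigma)\rhobar_g(\sigma)^{-1}$ directly), but the substance is identical.
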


\begin{proof}
	Conjugating if necessary, we can assume that  $r_f = r_g$. 
	Then we can define a character $\eta \colon G_{\Q} \to \Fbar_\ell^\times$ by $\eta(\sigma) = \rhobar_f(\sigma)\rhobar_g(\sigma)^{-1}$, and we have $\rhobar_f \cong \rhobar_g \otimes \eta$.  
	The lemma now follows from Lemma~\ref{twist}.
\end{proof}

\begin{lemma}\label{disjoint-im}
	For $i = 1, 2$, let $\mathbb{F}_{q_i}/\F_\ell$ be the field of cardinality $q_i$ in $\Fbar_\ell$, with $q_i$ some power of $\ell$, and let $r_i \colon G_{\Q} \to \PGL_2(\Fbar_\ell)$ be a continuous homomorphism with image containing $\PSL_2(\mathbb{F}_{q_i})$ and contained in $\PGL_2(\mathbb{F}_{q_i})$.
	Let $L_i$ be the subfield of $\Qbar$ fixed by $\ker(r_i)$ and let $K_i/\Q$ be the subextension of $L_i/\Q$ such that $\Gal(L_i/K_i) \cong \PSL_2(\mathbb{F}_{q_i})$.
	Then the following are equivalent:
	\begin{enumerate}
		\item\label{disjoint-im:not-ab} $L_1 \cap L_2 \not\subseteq K_1K_2$.
		\item\label{disjoint-im:equal} $L_1 = L_2$.
		\item\label{disjoint-im:conj} There is $\tau \in \Gal(\Fbar_\ell/\F_\ell)$ such that $r_1 \cong {}^\tau r_2$. 
	\end{enumerate}
\end{lemma}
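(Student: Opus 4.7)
My plan is to handle the three easy implications first and then concentrate on $(1) \Rightarrow (2)$, which is the heart of the lemma.

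The easy directions: For $(3) \Rightarrow (2)$, both conjugation by an element of $\PGL_2(\Fbar_\ell)$ and the field automorphism $\tau$ are injective, so $r_1 \cong {}^\tau r_2$ gives $\ker r_1 = \ker r_2$, hence $L_1 = L_2$. For $(2) \Rightarrow (1)$, comparing degrees gives $[L_1 \cap L_2 : K_1] = [L_1 : K_1] = |\PSL_2(\F_{q_1})| \geq 60$ while $[K_1 K_2 : K_1] \leq 2$. For $(2) \Rightarrow (3)$, when $L_1 = L_2$ both $r_i$ realize $\Gal(L_1/\Q)$ as a subgroup of $\PGL_2(\F_{q_i})$ containing $\PSL_2(\F_{q_i})$; simplicity of $\PSL_2(\F_q)$ and the fact that $\PSL_2(\F_{q_1}) \not\cong \PGL_2(\F_{q_2})$ force $q_1 = q_2 = q$ and identify the images up to conjugacy, so the two embeddings differ by an element of $\mathrm{Aut}(\PSL_2(\F_q)) = \PGL_2(\F_q) \rtimes \Gal(\F_q/\F_\ell)$, producing the required $\tau$.

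For $(1) \Rightarrow (2)$, set $M := L_1 \cap L_2$. The first step is to show $L_i = M K_i$: the subgroup $\Gal(L_i / M K_i) = \Gal(L_i/M) \cap \Gal(L_i/K_i)$ is the intersection of two subgroups of $\Gal(L_i/\Q)$ that are both normal (since $M/\Q$ and $K_i/\Q$ are Galois), hence is normal in the simple group $\Gal(L_i/K_i) \cong \PSL_2(\F_{q_i})$, so it is trivial or the whole group; the latter forces $M \subseteq K_i \subseteq K_1 K_2$, contradicting $(1)$. Restriction then gives isomorphisms $\Gal(M/M \cap K_i) \cong \Gal(L_i/K_i) \cong \PSL_2(\F_{q_i})$, realizing both as normal simple subgroups of $\Gal(M/\Q)$. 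Any two normal simple subgroups of a finite group either coincide or commute with trivial intersection; the second alternative would embed $\PSL_2(\F_{q_1}) \times \PSL_2(\F_{q_2})$ into $\Gal(M/\Q)$, yet $|\Gal(M/\Q)| \leq [L_1 : \Q] \leq 2|\PSL_2(\F_{q_1})|$ would then force $|\PSL_2(\F_{q_2})| \leq 2$, contradicting $\ell \geq 5$. So $F := M \cap K_1 = M \cap K_2$ and $q_1 = q_2 = q$.

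To finish $(1) \Rightarrow (2)$, I would carry out a short case analysis on $F$ relative to $K_1$ and $K_2$. If $L_1 \neq L_2$ then some $K_i$ properly contains $F$, which together with $[K_i : \Q] \leq 2$ forces $F = \Q$ and produces a case in which $\Gal(L_i/\Q) = \Gal(MK_i/\Q)$ has the structure of a direct product $\PSL_2(\F_q) \times \Z/2$ (because $M \cap K_i = \Q$). Such a direct product cannot embed into $\PGL_2(\Fbar_\ell)$ with image containing $\PSL_2(\F_q)$: the $\Z/2$ factor would centralize $\PSL_2(\F_q)$, but Schur's lemma applied to the absolutely irreducible action of $\SL_2(\F_q)$ on $\F_q^2$ shows that the centralizer of $\PSL_2(\F_q)$ in $\PGL_2(\Fbar_\ell)$ is trivial. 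This contradiction forces $L_1 = L_2$. I expect the main obstacle to be this final case analysis: the centralizer/Schur input is standard, but carefully enumerating the possibilities for $(F, K_1, K_2)$ and tracking the Galois-theoretic bookkeeping among $M$, $K_1$, and $K_2$ inside $L_1 L_2$ is delicate.
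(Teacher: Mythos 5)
Your proof is correct, and the three easy implications are handled exactly as the paper does. The key direction $(1) \Rightarrow (2)$, however, is argued quite differently and in a considerably longer way than the paper's version, which exploits a simpler observation: since $\Gal(L_i/\Q) \cong r_i(G_\Q)$ is squeezed between $\PSL_2(\F_{q_i})$ and $\PGL_2(\F_{q_i})$, and $\PSL_2(\F_{q_i})$ is the unique nontrivial proper normal subgroup of $\PGL_2(\F_{q_i})$, the only Galois subextensions of $L_i/\Q$ are $\Q$, $K_i$, and $L_i$. As $L_1 \cap L_2$ is Galois over $\Q$ and by hypothesis not contained in $K_1K_2$, it must equal $L_1$ and must equal $L_2$ at once, giving $L_1 = L_2$ immediately (the paper phrases this via a containment and Jordan--H\"older comparison, but the mechanism is the same constrained lattice of normal subgroups). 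Your route instead sets $M := L_1 \cap L_2$, proves $L_i = MK_i$ by normality and simplicity, realizes both $\PSL_2(\F_{q_i})$ inside $\Gal(M/\Q)$ as normal simple subgroups, rules out the direct-product alternative by a size bound, and closes with a centralizer/Schur argument (or, even more simply than Schur, one could just note $\PGL_2(\F_q)$ has trivial center while $\PSL_2(\F_q) \times \Z/2$ does not). This is valid and self-contained, but it reconstructs from scratch structure that the paper gets essentially for free from the very short list of normal subgroups of $\PSL_2$ and $\PGL_2$; the only thing your version buys is that it would also apply, with minor changes, to settings where the lattice of normal subgroups of the ambient group is less rigid.
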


\begin{proof}
	First note that for each $i = 1,2$, $\PSL_2(\mathbb{F}_{q_i})$ is simple since $\lvert \mathbb{F}_{q_i} \rvert \ge 4$ (recall that $\ell \ge 5$).
	
	Clearly, \eqref{disjoint-im:equal} implies \eqref{disjoint-im:not-ab}. 
	On the other hand, since $\PSL_2(\mathbb{F}_{q_i})$ is simple and is the unique nontrivial proper normal subgroup of $\PGL_2(\mathbb{F}_{q_i})$, if $L_1 \cap L_2 \not\subseteq K_1K_2$, then $L_1 \subseteq L_2$ or $L_2 \subseteq L_1$, and $K_1 = K_2$.
	In either case, by comparing Jordan--Holder factors, we must have $L_1 = L_2$. 
	
	It is immediate that \eqref{disjoint-im:conj} implies \eqref{disjoint-im:equal}.
	Conversely, if $L_1 = L_2$, then there is an isomorphism of groups $\phi \colon r_1(G_{\Q}) \cong r_2(G_{\Q})$. 
	In particular, this implies that $q_1 = q_2$.
	So letting $\F_q = \F_{q_1} = \F_{q_2}$, we have either that $r_1(G_{\Q}) = r_2(G_{\Q}) = \PSL_2(\F_q)$ and $\phi$ is an automorphism of $\PSL_2(\F_q)$, 
	or that $r_1(G_{\Q}) = r_2(G_{\Q}) = \PGL_2(\F_q)$ and $\phi$ is an automorphism of $\PGL_2(\F_q)$. 
	The automorphism group of both $\PSL_2(\F_q)$ and $\PGL_2(\F_q)$ is $\PGL_2(\F_q) \rtimes \Gal(\F_q/\F_\ell)$ (see \cite[Theorem 30]{steinberg1967lectures}), which implies that $r_1$ is conjugate to ${}^\tau r_2$ for some $\tau \in \Gal(\F_q/\F_\ell)$. 
\end{proof}

\begin{lemma}\label{simple-groups}
	Let $G_1,\ldots,G_s$ and $H$ be simple nonabelian groups. 
	Any surjective homomorphism $f \colon \prod_{i=1}^s G_i \to H$ factors through some projection $p_j \colon \prod_{i=1}^s G_i \to G_j$.
\end{lemma}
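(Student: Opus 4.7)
The plan is to exploit the fact that for any subset $S \subseteq \{1,\ldots,s\}$, the subgroup $\prod_{i \in S} G_i \subseteq \prod_{i=1}^s G_i$ is normal, so its image under the surjection $f$ is normal in $H$, and by simplicity of $H$ this image is either trivial or all of $H$. I would apply this observation to the individual factors $G_i$, and then use the non-abelian hypothesis to rule out more than one factor surviving.

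More concretely: for each $i$, view $G_i$ as the normal subgroup of $\prod_j G_j$ consisting of tuples that are trivial outside the $i$-th coordinate. Then $f(G_i)$ is normal in $H = f(\prod_j G_j)$. Since $H$ is simple, $f(G_i) \in \{\{e\}, H\}$. If $i \ne j$, elements of $G_i$ and $G_j$ commute inside the product, so $f(G_i)$ and $f(G_j)$ commute in $H$. If both equalled $H$, then $H$ would commute with itself, contradicting that $H$ is simple and non-abelian (which forces $H$ to have trivial centre and be non-commutative). Hence $f(G_i) = H$ for at most one index.

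On the other hand, since $f$ is surjective and $H$ is generated by $\bigcup_i f(G_i)$ (because the $G_i$ generate the product), at least one $f(G_i)$ must be non-trivial. Combining the two observations, there is exactly one $j$ with $f(G_j) = H$, and $f(G_i) = \{e\}$ for all $i \ne j$. Equivalently, $\prod_{i \ne j} G_i \subseteq \ker f$, which is precisely the statement that $f$ factors through the projection $p_j \colon \prod_i G_i \to G_j$.

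I do not anticipate a serious obstacle: the argument is elementary group theory and the only subtle point is making sure that the images $f(G_i)$ and $f(G_j)$ genuinely commute and that this forces one of them to be trivial, which uses non-abelianness of $H$ in an essential way (the hypothesis that $H$ is simple alone would not suffice, as the statement fails for $H = \Z/p$).
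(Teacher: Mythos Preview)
Your proof is correct and follows essentially the same approach as the paper: both arguments hinge on the facts that each $f(G_i)$ is normal in $H$, that images of distinct factors commute, and that $H$ being nonabelian then forces at most one $f(G_i)$ to equal $H$. A minor difference is that you invoke simplicity of $H$ directly to get the dichotomy $f(G_i)\in\{\{e\},H\}$, whereas the paper instead uses simplicity of the $G_i$ to deduce injectivity of the nontrivial $f\circ\iota_i$; your version thus never actually uses the hypothesis that the $G_i$ are simple.
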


\begin{proof}
	For each $1\le n \le s$, let $\iota_n \colon G_n \to \prod_{i=1}^s G_i$ be the canonical injection. 
	Assume that $f$ does not factor through any $p_j$. 
	Then there are indices $1\le m \ne n \le s$ such that $f\circ \iota_m \colon G_m \to H$ and $f \circ\iota_n \colon G_n \to H$ are nontrivial. 
	Since $G_n$ and $G_m$ are simple, $f\circ\iota_m$ and $f\circ\iota_n$ are injective. 
	Since $\iota_m(G_m)$ and $\iota_n(G_n)$ are normal subgroups of $\prod_{i=1}^s G_i$ and $f$ is surjective, $f\circ\iota_m(G_m)$ and $f\circ\iota_n(G_n)$ are normal subgroups of $H$. 
	We see that $f\circ\iota_m$ and $f\circ\iota_n$ are isomorphisms. 
	Since $H$ is nonabelian, we can then choose $x \in G_m$ and $y \in G_n$ such that $f\circ\iota_m(x)$ and $f\circ\iota_n(y)$ do not commute. 
	But $\iota_m(x)$ and $\iota_n(y)$ commute in $\prod_{i=1}^s G_i$, which gives a contradiction.
\end{proof}

\begin{lemma}\label{galois-extension}
	Let $f_1,\ldots,f_s \in S_k(N)$ be normalized eigenforms. 
	For each $1\le i \le s$,   assume that $\rhobar_{f_i}(G_{\Q})$ contains a conjugate of $\SL_2(\F_\ell)$, 
	and  let $M_i$ be the subfield of $\Qbar$ fixed by $\ker(\rhobar_{f_i})$.   Then we have the following.
	\begin{enumerate}
		\item\label{galois-extension:one-form} For each $1\le i \le s$, there is an extension $K_i/\Q$ of degree at most $2$ contained in $M_i$ and a finite extension $\mathbb{F}_{q_i}/\F_\ell$ such that $\rhobar_{f_i}(G_{K_i(\zeta_\ell)})$ is conjugate to $\SL_2(\mathbb{F}_{q_i})$.
		\item\label{galois-extension:multiple-forms} Assume moreover that for each $1\le i \ne j \le s$, there is no $\tau \in \Gal(\Fbar_\ell/\F_\ell)$ such that $r_{f_i} \cong {}^\tau r_{f_j}$. 
		Let $K_i$ and $\mathbb{F}_{q_i}$ be as in part~\ref{galois-extension:one-form}, and set $M = M_1\cdots M_s$ and $K = K_1\cdots K_s$. 
		Then $\Gal(M(\zeta_\ell)/K(\zeta_\ell)) \cong \prod_{i=1}^s \SL_2(\mathbb{F}_{q_i})$.
	\end{enumerate}
\end{lemma}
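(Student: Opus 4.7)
For Part~(1), after conjugating $\rhobar_{f_i}$ so that its image contains $\SL_2(\F_\ell)$, the inclusion $\Q(\zeta_\ell) \subseteq \Q^{\rm ab}$ gives $[G_\Q, G_\Q] \subseteq G_{\Q(\zeta_\ell)}$, and perfectness of $\SL_2(\F_\ell)$ for $\ell \ge 5$ yields $\SL_2(\F_\ell) \subseteq \rhobar_{f_i}(G_{\Q(\zeta_\ell)})$.  Since $\det\rhobar_{f_i} = \omega^{k-1}$ is trivial on $G_{\Q(\zeta_\ell)}$, this image lies in $\SL_2(\Fbar_\ell)$.  By Dickson's theorem, its projective image in $\PSL_2(\Fbar_\ell) = \PGL_2(\Fbar_\ell)$ is $\PSL_2(\F_{q_i})$ or $\PGL_2(\F_{q_i})$ for some power $q_i$ of $\ell$.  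Take $K_i = \Q$ in the first case; in the second, take $K_i$ to be the quadratic subextension of $M_i/\Q$ cut out by $G_\Q \to \PGL_2(\F_{q_i}) \to \PGL_2/\PSL_2 = \Z/2$.  In either case a direct verification (using that $-I$ is in the image) gives $\rhobar_{f_i}(G_{K_i(\zeta_\ell)}) = \SL_2(\F_{q_i})$.

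For Part~(2), I would proceed by induction on $s$, the base case being Part~(1).  For the inductive step, set $M' = M_1 \cdots M_{s-1}$ and apply Goursat's lemma to the natural injection
\[
\Gal(M(\zeta_\ell)/K(\zeta_\ell)) \hookrightarrow \Gal(M'(\zeta_\ell)/K(\zeta_\ell)) \times \Gal(M_s(\zeta_\ell)/K(\zeta_\ell)),
\]
which surjects onto each factor; this reduces the claim to showing $N := M'(\zeta_\ell) \cap M_s(\zeta_\ell) = K(\zeta_\ell)$.  Suppose not.  Then $\Gal(N/K(\zeta_\ell))$ is a nontrivial quotient of $\SL_2(\F_{q_s})$, whose unique simple quotient is $\PSL_2(\F_{q_s})$.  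Composing with the inductive description $\Gal(M'(\zeta_\ell)/K(\zeta_\ell)) \cong \prod_{j<s} \SL_2(\F_{q_j})$ and passing to this simple quotient (noting the target has trivial center) produces a surjection $\prod_{j<s} \PSL_2(\F_{q_j}) \twoheadrightarrow \PSL_2(\F_{q_s})$; by Lemma~\ref{simple-groups} it factors through the $j$-th projection for a unique $j < s$, so $q_j = q_s$ and, by Galois correspondence, $L_j \cdot K(\zeta_\ell) = L_s \cdot K(\zeta_\ell)$.

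The principal obstacle is then to extract from this field coincidence a contradiction with the hypothesis $r_{f_j} \not\cong {}^\tau r_{f_s}$.  By Lemma~\ref{disjoint-im}, the hypothesis forces $L_j \ne L_s$ and $L_j \cap L_s \subseteq K_j K_s$.  Following the strategy of \cite[Lemma~7.1.5(3)]{10author}, one uses the identifications $L_j \cap K(\zeta_\ell) = K_j$ and $L_s \cap K(\zeta_\ell) = K_s$ to view $L_j$ and $L_s$ as two descents of the common $\PSL_2(\F_{q_j})$-extension $F = L_j \cdot K(\zeta_\ell) = L_s \cdot K(\zeta_\ell)$ of $K(\zeta_\ell)$ to $K_j$ and $K_s$ respectively; the automorphism structure $\PGL_2(\F_{q_j}) \rtimes \Gal(\F_{q_j}/\F_\ell)$ of $\PSL_2(\F_{q_j})$ (cf.\ the proof of Lemma~\ref{disjoint-im}) then conjugates one descent into the other, producing a $\tau \in \Gal(\Fbar_\ell/\F_\ell)$ with $r_{f_j} \cong {}^\tau r_{f_s}$, a contradiction.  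This completes the induction.
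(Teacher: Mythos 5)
Your Part~(1) is essentially the paper's argument (Dickson, then perfectness of $\SL_2(\F_{q_i})$ to pin the image as $\SL_2$ rather than an index-two subgroup), though you choose $K_i$ according to the projective image of $G_{\Q(\zeta_\ell)}$ rather than $G_\Q$; in the case where $r_{f_i}(G_\Q)=\PGL_2(\F_{q_i})$ but $r_{f_i}(G_{\Q(\zeta_\ell)})=\PSL_2(\F_{q_i})$ you would take $K_i=\Q$, which is \emph{not} the $K_i$ used in Lemma~\ref{disjoint-im} (there one requires $\Gal(L_i/K_i)\cong\PSL_2(\F_{q_i})$). This is harmless only because the relevant quadratic then lies in $\Q(\zeta_\ell)$, so $K_i(\zeta_\ell)$ agrees with the paper's; but you should say so, or simply pick $K_i$ from the image of $G_\Q$ as the paper does, so that your later appeal to Lemma~\ref{disjoint-im} is literal.

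In Part~(2) there is a genuine gap in the inductive setup. You write $\Gal(M'(\zeta_\ell)/K(\zeta_\ell))$ and $\Gal(M_s(\zeta_\ell)/K(\zeta_\ell))$, but $K(\zeta_\ell)$ need not be contained in either $M'(\zeta_\ell)$ or $M_s(\zeta_\ell)$: the correct fields are the composita $M'K(\zeta_\ell)$ and $M_sK(\zeta_\ell)$. More seriously, your inductive hypothesis at level $s-1$ identifies $\Gal(M'(\zeta_\ell)/K'(\zeta_\ell))$ with $\prod_{i<s}\SL_2(\F_{q_i})$ over the \emph{smaller} base $K'(\zeta_\ell)$ with $K'=K_1\cdots K_{s-1}$, while the Goursat argument requires the two factors $\Gal(M'K(\zeta_\ell)/K(\zeta_\ell))$ and $\Gal(M_sK(\zeta_\ell)/K(\zeta_\ell))$ to already be $\prod_{i<s}\SL_2(\F_{q_i})$ and $\SL_2(\F_{q_s})$. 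This is exactly what the paper's claims~(a)--(c) establish; the point is that $\prod\SL_2(\F_{q_i})$ has trivial abelianization, so $M'(\zeta_\ell)\cap K(\zeta_\ell)=K'(\zeta_\ell)$ and similarly $M_s(\zeta_\ell)\cap K(\zeta_\ell)=K_s(\zeta_\ell)$, which you never address. Finally, the closing ``descent'' step is under-specified: from $L_jK(\zeta_\ell)=L_sK(\zeta_\ell)$ you can produce $\tau$ with $r_{f_j}|_{G_{K(\zeta_\ell)}}\cong {}^\tau r_{f_s}|_{G_{K(\zeta_\ell)}}$ via $\mathrm{Aut}(\PSL_2(\F_{q_j}))\cong\PGL_2(\F_{q_j})\rtimes\Gal(\F_{q_j}/\F_\ell)$, but you then need to upgrade this to an isomorphism over $G_\Q$ (e.g.\ by the triviality of the centralizer of $\PSL_2(\F_{q_j})$ in $\PGL_2(\Fbar_\ell)$ and normality of $G_{K(\zeta_\ell)}$), and this step is silently omitted. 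The paper sidesteps all of this with a shorter Jordan--H\"older count: $L_s\subseteq L_jK(\zeta_\ell)$ forces $\Gal(L_jL_s/\Q)$ to have a unique nonabelian Jordan--H\"older factor, whence $L_j\cap L_s\not\subseteq K_jK_s$, directly contradicting Lemma~\ref{disjoint-im}. You should either supply the missing pieces of the descent argument or substitute the Jordan--H\"older count.
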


\begin{proof}
	For each $1 \le i \le s$, since $\rhobar_{f_i}(G_{\Q})$ contains a conjugate of $\SL_2(\F_\ell)$, \cite[Theorem~2.47(b)]{darmon-diamond-taylor} implies that the image of $r_{f_i}$ is conjugate to either $\PSL_2(\mathbb{F}_{q_i})$ or $\PGL_2(\mathbb{F}_{q_i})$ for some finite extension $\mathbb{F}_{q_i}/\F_\ell$. 
	Replacing $\rhobar_{f_i}$ by a conjugate if necessary, we assume that $r_{f_i}$ has image either $\PSL_2(\mathbb{F}_{q_i})$ or $\PGL_2(\mathbb{F}_{q_i})$. 
	We then let $K_i/\Q$ be the extension of degree at most $2$ such that $r_{f_i}(G_{K_i}) = \PSL_2(\mathbb{F}_{q_i})$, which is a simple group since $\ell \ge 5$. 
	Then $r_{f_i}(G_{K_i}(\zeta_\ell)) = \PSL_2(\mathbb{F}_{q_i})$ as well, so $\rhobar_{f_i}(G_{K_i(\zeta_\ell)})$ is a subgroup of $\overline{\F}_\ell^\times \SL_2(\mathbb{F}_{q_i})$ which contains $\SL_2(\mathbb{F}_{q_i})$ and  has trivial determinant. 
	It follows that $\rhobar_{f_i}(G_{K_i(\zeta_\ell)}) = \SL_2(\mathbb{F}_{q_i})$.
	
	To prove part~\ref{galois-extension:multiple-forms}, we first establish some preliminaries.
	For each $1\le i \le s$, let $L_i$ be the subfield of $M_i$ fixed by $\ker(r_{f_i})$. 
	We claim the following.
	\begin{enumerate}[label=(\alph*)]
		\item\label{Liext} $\Gal(L_iK(\zeta_\ell)/K(\zeta_\ell)) \cong \PSL_2(\mathbb{F}_{q_i})$.
		\item\label{Miext} $\Gal(M_iK(\zeta_\ell)/K(\zeta_\ell)) \cong \SL_2(\mathbb{F}_{q_i})$.
		\item\label{Lidisjoint} For $1 \le i \ne j \le s$, the fields $L_iK(\zeta_\ell)$ and $L_jK(\zeta_\ell)$ are disjoint over $K(\zeta_\ell)$.
	\end{enumerate}
	Since $\Gal(L_i/K_i) \cong \PSL_2(\mathbb{F}_{q_i})$ is nonabelian and simple and $K(\zeta_\ell)/K_i$ is abelian, these extensions are disjoint and $\Gal(L_iK(\zeta_\ell)/K(\zeta_\ell)) \cong \PSL_2(\mathbb{F}_{q_i})$, which gives claim~\ref{Liext}. 
	Further, since the unique nontrivial proper quotient of $\Gal(M_iK_i(\zeta_\ell)/K_i(\zeta_\ell)) \cong \SL_2(\mathbb{F}_{q_i})$ is $\Gal(L_iK_i(\zeta_\ell)/K_i(\zeta_\ell)) \cong \PSL_2(\mathbb{F}_{q_i})$, we have 
	\begin{gather*} M_iK_i(\zeta_\ell) \cap K(\zeta_\ell) \ne K_i(\zeta_\ell) \Leftrightarrow L_iK_i(\zeta_\ell) \cap K(\zeta_\ell) \ne K_i(\zeta_\ell). \end{gather*}
	So $M_iK_i(\zeta_\ell)$ and $K(\zeta_\ell)$ are also disjoint over $K_i(\zeta_\ell)$, which proves claim~\ref{Miext}. 
	We now prove claim~\ref{Lidisjoint}. 
	If this were not the case, we would have $L_i \subseteq L_jK(\zeta_\ell)$ or $L_j \subseteq L_iK(\zeta_\ell)$. 
	Without loss of generality, assume that $L_i \subseteq L_jK(\zeta_\ell)$. 
	But $\Gal(L_jK(\zeta_\ell)/\Q) \hookrightarrow \Gal(L_j/\Q) \times \Gal(K(\zeta_\ell)/\Q)$ has a unique nonabelian Jordan--Holder factor, namely the one isomorphic to $\Gal(L_j/K_j)$. 
	So $L_i \subseteq L_jK(\zeta_\ell)$ implies that $\Gal(L_iL_j/\Q)$ also has a unique nonabelian Jordan--Holder factor, which implies that $L_i \cap L_j \not\subseteq K_iK_j$. 
	By Lemma~\ref{disjoint-im}, this contradicts our hypotheses on $r_{f_i}$ and $r_{f_j}$. 
	
	To conclude, we prove by induction on $1 \le j \le s$ that 
	\begin{gather*} \Gal(M_1\cdots M_jK(\zeta_\ell)/K(\zeta_\ell)) \cong \prod_{i=1}^j \Gal(M_iK(\zeta_\ell)/K(\zeta_\ell)) \cong \prod_{i=1}^j \SL_2(\mathbb{F}_{q_i}).\end{gather*}
	The $j = 1$ case follows from claim~\ref{Miext} of the previous paragraph. 
	Now take $1 \le j-1 \le s$, and assume that 
	\begin{gather*} \Gal(M_1\cdots M_{j-1}K(\zeta_\ell)/K(\zeta_\ell)) \cong \prod_{i=1}^{j-1} \Gal(M_iK(\zeta_\ell)/K(\zeta_\ell)) \cong \prod_{i=1}^{j-1} \SL_2(\mathbb{F}_{q_i}).\end{gather*}
	We want to show that $M_1\cdots M_{j-1}K(\zeta_\ell)$ and $M_jK(\zeta_\ell)$ are disjoint over $K(\zeta_\ell)$. 
	Assume otherwise. 
	Since $\Gal(L_jK(\zeta_\ell)) \cong \PSL_2(\F_{q_j})$ is the unique nontrivial proper quotient of $\Gal(M_jK(\zeta_\ell)/K(\zeta_\ell)) \cong \SL_2(\F_{q_j})$, we must then have $L_jK(\zeta_\ell) \subseteq M_1\cdots M_{j-1}K(\zeta_\ell)$. 
	Since $\Gal(L_jK(\zeta_\ell)/K(\zeta_\ell))$ is nonabelian simple, the centre of $\prod_{i=1}^{j-1} \SL_2(\mathbb{F}_{q_i})$ maps trivially under the surjective morphism
	\begin{gather*} \prod_{i=1}^{j-1} \SL_2(\mathbb{F}_{q_i}) \cong \Gal(M_1\cdots M_{j-1}K(\zeta_\ell)/K(\zeta_\ell)) \to \Gal(L_jK(\zeta_\ell)/K(\zeta_\ell)), \end{gather*}
	and this map factors through
	\begin{gather*} \prod_{i=1}^{j-1} \PSL_2(\mathbb{F}_{q_i}) \cong \prod_{i=1}^{j-1} \Gal(L_iK(\zeta_\ell)/K(\zeta_\ell)). \end{gather*}
	By Lemma~\ref{simple-groups}, this map further factors through some $\Gal(L_iK(\zeta_\ell)/K(\zeta_\ell))$ with $1 \le i \le j-1$. 
	But this implies that $L_jK(\zeta_\ell) \subseteq L_iK(\zeta_\ell)$, contradicting claim~\ref{Lidisjoint} from the previous paragraph. 
	This concludes the proof.
\end{proof}

\begin{proposition}\label{congruences}
	Let $f_1,\ldots,f_s \in S_k(N)$ be normalized eigenforms such that for each $1\le i \le s$, $\rhobar_{f_i}(G_{\Q})$ contains a conjugate of $\SL_2(\F_\ell)$. 
	Then for any $\gamma \in \SL_2(\F_\ell)$, there is an element $\sigma \in \Gal(\Qbar/\Q(\zeta_{\ell}))$ such that $\rhobar_{f_i}(\sigma)$ is conjugate to $\gamma$ for each $1\le i \le s$.
\end{proposition}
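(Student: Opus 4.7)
The plan is to reduce to a sub-collection whose projective representations $r_{f_i}$ are pairwise non-conjugate under the action of $\Gal(\Fbar_\ell/\F_\ell)$, apply Lemma~\ref{galois-extension}(\ref{galois-extension:multiple-forms}), and then realize the desired $\sigma$ via the resulting product decomposition of the Galois group.

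First I would partition $\{1,\ldots,s\}$ into equivalence classes under $i \sim j$ iff $r_{f_i} \cong {}^\tau r_{f_j}$ for some $\tau \in \Gal(\Fbar_\ell/\F_\ell)$, and choose one representative from each class. The key point is that a single $\sigma$ that works for every representative works for every $f_i$: if $r_{f_j} \cong {}^\tau r_{f_i}$, then replacing $\iota_\ell$ by $\tau \circ \iota_\ell$ exhibits ${}^\tau \rhobar_{f_j}$ as the mod $\ell$ representation of a normalized Hecke eigenform, so Lemma~\ref{proj-im} applied to the pair $(\rhobar_{f_i}, {}^\tau\rhobar_{f_j})$ yields $\rhobar_{f_i} \cong {}^\tau \rhobar_{f_j} \otimes \eta$ for some character $\eta$, which by Lemma~\ref{twist} is either trivial or $\omega^{(\ell-1)/2}$. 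Either way $\eta$ becomes trivial on $G_{\Q(\zeta_\ell)}$, so $\rhobar_{f_i}(\sigma)$ is conjugate to $\tau(\rhobar_{f_j}(\sigma))$ for every $\sigma \in G_{\Q(\zeta_\ell)}$. Because $\gamma \in \SL_2(\F_\ell)$ is fixed by every $\tau$, any $\sigma$ with $\rhobar_{f_i}(\sigma)$ conjugate to $\gamma$ automatically has $\rhobar_{f_j}(\sigma)$ conjugate to $\gamma$.

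After this reduction the representatives satisfy the hypothesis of Lemma~\ref{galois-extension}(\ref{galois-extension:multiple-forms}). Let $M$, $K$, and $\F_{q_i}$ be as in that lemma. After conjugating each $\rhobar_{f_i}$ appropriately, the isomorphism
\[
\Gal(M(\zeta_\ell)/K(\zeta_\ell)) \xrightarrow{\sim} \prod_{i=1}^s \SL_2(\F_{q_i})
\]
is realized by $\sigma \mapsto (\rhobar_{f_1}(\sigma), \ldots, \rhobar_{f_s}(\sigma))$; the images land in $\SL_2$ rather than in a larger subgroup of $\GL_2(\F_{q_i})$ because $\det \rhobar_{f_i} = \omega^{k-1}$ is trivial on $G_{\Q(\zeta_\ell)}$. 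Viewing $\gamma$ diagonally in $\prod_i \SL_2(\F_{q_i})$ and lifting through the surjection $\Gal(\Qbar/K(\zeta_\ell)) \twoheadrightarrow \Gal(M(\zeta_\ell)/K(\zeta_\ell))$ will produce the desired $\sigma \in \Gal(\Qbar/K(\zeta_\ell)) \subseteq \Gal(\Qbar/\Q(\zeta_\ell))$.

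The step I expect to require the most care is the $\tau$-twist reduction, because Lemma~\ref{proj-im} as stated concerns pairs of eigenforms with the same (untwisted) projective representation. I would first argue carefully that ${}^\tau \rhobar_{f_j}$ is itself the mod $\ell$ representation attached to a normalized eigenform (obtained by composing $\iota_\ell$ with $\tau$) so that Lemmas~\ref{twist} and~\ref{proj-im} can be applied to the pair $(\rhobar_{f_i}, {}^\tau\rhobar_{f_j})$. Once this is in place, the rest of the proof is a clean extraction from Lemma~\ref{galois-extension}(\ref{galois-extension:multiple-forms}) combined with the product structure of $\prod_i \SL_2(\F_{q_i})$.
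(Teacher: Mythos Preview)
Your approach is essentially identical to the paper's: reduce to representatives under the equivalence $r_{f_i}\cong{}^\tau r_{f_j}$, argue that a good $\sigma$ for the representatives works for all (since the twisting character is trivial on $G_{\Q(\zeta_\ell)}$ and $\gamma\in\SL_2(\F_\ell)$ is $\tau$-fixed), and then invoke Lemma~\ref{galois-extension}\eqref{galois-extension:multiple-forms}. Your flagged concern about applying Lemma~\ref{proj-im} across a $\tau$-twist is well-placed and the paper is terse here; your fix via changing $\iota_\ell$ works, though it is equally clean to simply rerun the argument of Lemma~\ref{proj-im} with ${}^\tau\rhobar_{f_j}$ in place of $\rhobar_g$, noting that ${}^\tau\rhobar_{f_j}$ still has unipotent inertia at primes $p\mid N$ and determinant $\omega^{k-1}$ (since $\omega$ is $\F_\ell$-valued).
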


\begin{proof}
	Say $r_{f_i} \cong {}^\tau r_{f_j}$ for some $i \ne j$ and $\tau \in \Gal(\Fbar_\ell/\F_\ell)$. 
	Then by Lemma~\ref{proj-im}, $\rhobar_{f_i}(\sigma)$ is conjugate to ${}^\tau\rhobar_{f_j}(\sigma)$ for any $\sigma \in \Gal(\Qbar/\Q(\zeta_{\ell}))$.
	If further $\rhobar_{f_j}(\sigma)$ is conjugate to $\gamma \in \SL_2(\F_\ell)$, then ${}^\tau\rhobar_{f_j}(\sigma)$ is conjugate to $\gamma$ as well. 
	We can thus assume that for each $1\le i \ne j \le s$, there is no $\tau \in \Gal(\Fbar_\ell/\F_\ell)$ such that $r_{f_i} \cong {}^\tau r_{f_j}$. 
	The result now follows from Lemma~\ref{galois-extension}.
\end{proof}

We can now prove Theorem~\ref{thm:lowweight}, using Propositions~\ref{big-image} and~\ref{congruences} as a base case for induction.


\begin{proof}[Proof of Theorem~\ref{thm:lowweight}]
	Choose a number field $E$ containing all Fourier coefficients of all the newforms in $S_{\ell-3}^{\mathrm{new}}(6)$. 
	Let $\lambda$ be the prime of $E$ induced by our fixed embedding $\iota_\ell \colon \Qbar \hookrightarrow \Qbar_\ell$. 
	Let $E_\lambda$ be the completion of $E$ at $\lambda$, let $\cO_\lambda$ be its ring of integers, and let $\F = \cO_\lambda/\lambda$ be the residue field. 
	Then for any newform $f \in S_{\ell-3}^{\mathrm{new}}(6)$, the Galois representations $\rho_f$ and $\rhobar_f$ of Theorem~\ref{gal-rep} can be defined over $\cO_\lambda$ and $\F$, respectively. 
	By Lemma~\ref{lem:from-lambda-to-ell}, there is an integer $r \ge m$ such that it suffices to show that there is a positive density set of primes $Q$ with $Q \equiv 1 \pmod{\ell^m}$ and $f\big|T(Q)\equiv f \pmod {\lambda^r}$ for all newforms $f \in S_{\ell-3}^{\mathrm{new}}(6)$.

By Propositions~\ref{big-image} and~\ref{congruences}, we can find an element $\sigma \in \Gal(\Qbar/\Q(\zeta_{\ell}))$ such that $\rhobar_f(\sigma)$ is conjugate to $\begin{pmatrix} 1 & 1 \\ -1 & 0 \end{pmatrix}$ for any newform $f \in S_{\ell-3}^{\mathrm{new}}(6)$. 
In particular, $\rho_f(\sigma)$ has characteristic polynomial congruent to $x^2 - x + 1 \pmod \lambda$. Enlarging $E$ if necessary, we can assume that $x^2 - x + 1$ factors in $\F$ and its roots are the two primitive 6-th roots of unity, which we denote by  $\xi$ and $\xi'$. Since $\xi$ and $\xi'$ are distinct, 
we can factor the characteristic polynomial of $\rho_f(\sigma)$ over $\mathcal{O}_\lambda$ by Hensel's lemma, and $\rho_f(\sigma)$ is conjugate to a diagonal matrix with entries $\alpha, \beta$ such that
$\alpha \equiv \xi \pmod{\lambda}$ and $\beta \equiv \xi' \pmod{\lambda}$. Letting $\xi$ and $\xi'$ again denote the primitive 6-th roots of unity in $\mathcal{O}_\lambda$, we write 
$\alpha=\xi \gamma$ and $\beta =\xi' \delta$ with $\gamma, \delta \equiv 1 \pmod \lambda$. Now $\rho_f(\sigma^{\ell^{r-1}})$ is conjugate to a diagonal matrix with entries
$\alpha^{\ell^{r-1}}=\xi^{\ell^{r-1}} \gamma^{\ell^{r-1}}$ and $\beta^{\ell^{r-1}} =\xi'^{\ell^{r-1}} \delta^{\ell^{r-1}}$. Observe that both $\gamma^{\ell^{r-1}}$ and $\delta^{\ell^{r-1}}$ are congruent to 1 modulo $\lambda^r$ (recall that $\lambda|\ell$), and that $\{\xi^{\ell^{r-1}},\xi'^{\ell^{r-1}}\}=\{\xi, \xi'\}$. Thus the characteristic polynomial of $\rho_f(\sigma^{\ell^{r-1}})$ is congruent to $(x-\xi)(x-\xi')=x^2-x+1$ modulo $\lambda^{r}$. Also, $\sigma^{\ell^{r-1}} \in \Gal(\Qbar/\Q(\zeta_{\ell^{m}}))$ since $\Gal(\Q(\zeta_{\ell^{m}})/\Q(\zeta_\ell))$ has order $\ell^{m-1}$ and $r \ge m$. Chebotarev's density theorem implies that for a positive density set of primes $Q$, $\Frob_Q$ is conjugate to $\sigma^{\ell^{r-1}}$. 
For such $Q$, we have $Q \equiv 1 \pmod{\ell^{m}}$ and for any newform $f \in S_{\ell-3}^{\mathrm{new}}(6)$, we have
\begin{gather*}f\big|T(Q)=(\operatorname{tr} \rho_f(\Frob_Q) )f \equiv f \pmod{\lambda^{r}}.\end{gather*} 
The theorem is now proven.
\end{proof}

\section{Congruences in arbitrary weight}
In this section, we use a different argument to prove two variants of Theorem \ref{thm:lowweight} in arbitrary integral weight $k\ge 2$ with additional hypotheses on the prime $\ell$. 
It will be convenient for us to fix a number field $E$ containing all Fourier coefficients of all newforms in $S_{k}^{\mathrm{new}}(6, \varepsilon_2, \varepsilon_3 )$. 
Let $\lambda$ be the prime of $E$ induced by our fixed embedding $\iota_\ell \colon \Qbar \hookrightarrow \Qbar_\ell$ and let $e$ be the ramification index.

We begin with an elementary lemma.
\begin{lemma} \label{elementary arithmetic}
	If  $a$ is an integer with  $2^a \equiv -2 \pmod{\ell}$, then 
	$2^{\ell^{m-1}(a-1)+1}\equiv -2\pmod {\ell^m}$ for any $m\geq 1$.
\end{lemma}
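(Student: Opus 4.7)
The plan is to reduce to a standard lifting-the-exponent argument. Setting $u := 2^{a-1}$, the hypothesis $2^a \equiv -2 \pmod{\ell}$ is equivalent to $u \equiv -1 \pmod{\ell}$, and the desired conclusion $2^{\ell^{m-1}(a-1)+1} \equiv -2 \pmod{\ell^m}$ is equivalent to $u^{\ell^{m-1}} \equiv -1 \pmod{\ell^m}$ after dividing by $2$ (which is a unit modulo $\ell^m$ since $\ell \ge 5$). So it suffices to establish the latter congruence.

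The key ingredient is the following elementary fact: for any integers $A, B$ and any $k \ge 1$, if $A \equiv B \pmod{\ell^k}$ then $A^{\ell} \equiv B^{\ell} \pmod{\ell^{k+1}}$. I would prove this by writing $A = B + \ell^k w$, expanding $A^{\ell}$ via the binomial theorem, and checking that each term other than $B^{\ell}$ is divisible by $\ell^{k+1}$: the $i$-th binomial term ($1 \le i \le \ell-1$) contributes a factor of $\ell$ from $\binom{\ell}{i}$ together with a factor $\ell^{ik}$, giving divisibility by $\ell^{ik+1} \ge \ell^{k+1}$, while the $i=\ell$ term contributes $\ell^{\ell k}$ and $\ell k \ge k+1$ since $\ell \ge 3$ and $k \ge 1$.

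With this fact in hand, the proof is a direct induction on $m$. The base case $m=1$ is the statement $u \equiv -1 \pmod{\ell}$, which is the hypothesis. For the inductive step, assume $u^{\ell^{m-1}} \equiv -1 \pmod{\ell^m}$; applying the binomial fact with $A = u^{\ell^{m-1}}$, $B = -1$, and $k = m$ yields $u^{\ell^m} \equiv (-1)^{\ell} = -1 \pmod{\ell^{m+1}}$ (using that $\ell$ is odd), which is the desired statement for $m+1$.

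There is no real obstacle here; the only mild subtlety is being careful that the binomial estimate uses $\ell \ge 3$ and $k \ge 1$, both of which hold throughout (recall $\ell \ge 5$). The whole argument is a couple of lines once one sets $u = 2^{a-1}$ and invokes the lifting lemma.
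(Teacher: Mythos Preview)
Your proof is correct and follows essentially the same route as the paper. Both arguments set $u=2^{a-1}$ and induct on $m$, reducing to the step $u\equiv -1\pmod{\ell^m}\Rightarrow u^{\ell}\equiv -1\pmod{\ell^{m+1}}$; the only cosmetic difference is that the paper proves this step via the factorization $u^{\ell}+1=(u+1)\bigl(u^{\ell-1}-u^{\ell-2}+\cdots+1\bigr)$ and notes the second factor is $\equiv \ell\pmod\ell$, while you use the binomial expansion. One very minor point: you state the lifting fact ``for any integers $A,B$'', but if $a\le 0$ then $u=2^{a-1}$ lies in $\Z_{(\ell)}$ rather than $\Z$; the binomial argument works verbatim there, so this is harmless.
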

\begin{proof}
	We induct on $m$. Suppose that $2^b \equiv -1 \pmod{\ell^m}$, and write
	\begin{gather*}
	2^{\ell b}+1=(2^b+1)\( (2^b)^{\ell-1}-(2^b)^{\ell-2}+\cdots -2^b+1 \). 
	\end{gather*}
	Each summand in the second factor is 1 modulo $\ell$, and there are $\ell$ summands, so the second term is divisible by $\ell$.
\end{proof}

\begin{theorem} \label{weight k at 2}
	Suppose that $\ell\geq 5$ is prime and that there exists an integer $a$ for which $2^a \equiv -2 \pmod\ell$. Let $m$ be a natural number and let $\ep_2, \ep_3\in \{\pm1\}$.
	Then a positive proportion of primes $Q \equiv -2 \pmod{\ell^m}$ have the following property: for every $f \in S_{k}^{\mathrm{new}}(6, \varepsilon_2, \varepsilon_3 )$, we have $f\big|T(Q) \equiv -(-\ep_2)^a Q^{\frac{k-2}{2}} f \pmod{\ell^m}$.
\end{theorem}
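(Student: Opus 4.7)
The strategy is to produce a single element of $G_\Q$ whose image under the Galois representation $\rho_g$ attached to every newform $g\in S_k^{\operatorname{new}}(6,\varepsilon_2,\varepsilon_3)$ has trace congruent to the prescribed scalar, and whose cyclotomic character value is $\equiv -2\pmod{\ell^m}$. Chebotarev then produces the required positive proportion of primes $Q\equiv -2\pmod{\ell^m}$, and Lemma~\ref{lem:from-lambda-to-ell} in the form of Remark~\ref{rmk:lambda-to-ell-epsilon} bootstraps from newforms to the whole space. The key observation is that every newform $g$ in the given space has $a_2(g)=-\varepsilon_2\cdot 2^{(k-2)/2}$ by Atkin--Lehner theory, so the local structure of $\rho_g$ at $2$ (ramified, since $g$ is $2$-new) is uniform across all $g$.

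Let $j=(k-2)/2$, let $c$ be as in Remark~\ref{rmk:lambda-to-ell-epsilon} for the space $S_k^{\operatorname{new}}(6,\varepsilon_2,\varepsilon_3)$, set $M=m+\lceil c/e\rceil$, and put $a'=\ell^{M-1}(a-1)+1$, so that $2^{a'}\equiv -2\pmod{\ell^M}$ by Lemma~\ref{elementary arithmetic}. Using $\iota_2$, regard $G_2\subseteq G_\Q$ and let $\sigma\in G_2$ be any lift of $\Frob_2^{a'}\in G_2/I_2$. Since $\chi$ is unramified at $2$, $\chi(\sigma)=2^{a'}\equiv -2\pmod{\ell^M}$. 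For any newform $g$, part~\ref{gal-rep:at-q} of Theorem~\ref{gal-rep} presents $\rho_g|_{G_2}$, in a suitable basis, as an upper-triangular matrix with diagonal $(\chi\psi_g,\psi_g)$, where $\psi_g$ is unramified with $\psi_g(\Frob_2)=a_2(g)=-\varepsilon_2\cdot 2^j$. Therefore
\begin{gather*}
\operatorname{tr}\rho_g(\sigma)=2^{a'}(-\varepsilon_2\cdot 2^j)^{a'}+(-\varepsilon_2\cdot 2^j)^{a'}=(2^{a'}+1)(-\varepsilon_2)^{a'}(2^{a'})^{j}.
\end{gather*}
Modulo $\ell^M$ the factor $2^{a'}+1$ is $\equiv -1$ and $(2^{a'})^j\equiv(-2)^j$; moreover $a'-a=(\ell^{M-1}-1)(a-1)$ is even since $\ell$ is odd, so $(-\varepsilon_2)^{a'}=(-\varepsilon_2)^a$. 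Hence $\operatorname{tr}\rho_g(\sigma)\equiv -(-\varepsilon_2)^a(-2)^j\pmod{\lambda^{eM}}$, a scalar independent of $g$.

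Now let $L/\Q$ be a finite Galois extension containing $\Q(\zeta_{\ell^M})$ and cut out by all mod-$\lambda^{eM}$ reductions of the $\rho_g$ as $g$ ranges over the finitely many newforms in $S_k^{\operatorname{new}}(6,\varepsilon_2,\varepsilon_3)$, and let $\tau\in\Gal(L/\Q)$ be the image of $\sigma$. By Chebotarev a positive density of primes $Q$ have $\Frob_Q$ conjugate to $\tau$; for each such $Q$ we have $Q\equiv -2\pmod{\ell^M}$ (hence $\pmod{\ell^m}$) and
\begin{gather*}
a_Q(g)=\operatorname{tr}\rho_g(\Frob_Q)\equiv -(-\varepsilon_2)^a(-2)^j\equiv -(-\varepsilon_2)^a Q^{(k-2)/2}\pmod{\lambda^{eM}}
\end{gather*}
for each newform $g$. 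Since $eM\ge em+c$, Remark~\ref{rmk:lambda-to-ell-epsilon} upgrades this to $f\big|T(Q)\equiv -(-\varepsilon_2)^a Q^{(k-2)/2}f\pmod{\ell^m}$ for every $f\in S_k^{\operatorname{new}}(6,\varepsilon_2,\varepsilon_3)$. This is a positive proportion of primes $\equiv -2\pmod{\ell^m}$, as required.

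The main subtlety is the local analysis at $p=2$: because each $g$ is $2$-new, $\Frob_2$ is only defined modulo $I_2$, but the diagonal entries of $\rho_g(\sigma)$ are insensitive to the choice of lift (inertia acts unipotently), which is precisely what makes $\operatorname{tr}\rho_g(\sigma)$ well-defined and uniform in $g$. The hypothesis $2^a\equiv -2\pmod\ell$ is used to force $2^{a'}+1\equiv -1\pmod{\ell^M}$, collapsing $(2^{a'}+1)a_2(g)^{a'}$ to $-a_2(g)^{a'}$ and matching the required scalar after the parity check for $(-\varepsilon_2)^{a'}$.
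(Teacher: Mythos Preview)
Your proof is correct and follows essentially the same strategy as the paper: use the uniform Atkin--Lehner value $a_2(g)=-\varepsilon_2\,2^{(k-2)/2}$ and the local shape of $\rho_g|_{G_2}$ to compute the trace at a suitable power of $\Frob_2$, boost the congruence $2^a\equiv -2$ to modulus $\ell^M$ via Lemma~\ref{elementary arithmetic} (preserving parity of the exponent), apply Chebotarev, and then invoke Remark~\ref{rmk:lambda-to-ell-epsilon} to pass from newforms to all of $S_k^{\mathrm{new}}(6,\varepsilon_2,\varepsilon_3)$. If anything, your write-up is slightly more careful than the paper's in two places: you explicitly take the compositum $L$ over all newforms before applying Chebotarev (the paper writes ``the fixed field of the kernel of $\rho_f\bmod\lambda^r$'' for a single $f$, which should be read as the compositum), and you spell out why the trace of $\rho_g(\sigma)$ is insensitive to the choice of lift of $\Frob_2^{a'}$.
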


\begin{proof} 
Let $E_\lambda$ be the completion of $E$ at $\lambda$, let $\cO_\lambda$ be its ring of integers, and let $\F = \cO_\lambda/\lambda$ be the residue field. 	
Then for any newform $f \in S_{k}^{\mathrm{new}}(6, \varepsilon_2, \varepsilon_3 )$, the Galois representations $\rho_f$ and $\rhobar_f$ of Theorem~\ref{gal-rep} can be defined over $\cO_\lambda$ and $\F$, respectively. 
By Lemma~\ref{lem:from-lambda-to-ell} and the  remark that follows it, there is an integer $r \ge m$ such that it suffices to show there is a positive proportion of primes $Q \equiv -2 \pmod{\ell^m}$ with the property that for every newform $f \in S_{k}^{\mathrm{new}}(6, \varepsilon_2, \varepsilon_3 )$, we have $f\big|T(Q) \equiv -(-\ep_2)^a Q^{\frac{k-2}{2}} f \pmod{\lambda^r}$.
By Lemma~\ref{elementary arithmetic}, $2^{\ell^{r-1}(a-1)+1}\equiv -2\pmod {\ell^r}$. 
Since $a$ and $\ell^{r-1}(a-1)+1$ have the same parity, we can replace $a$ with $\ell^{r-1}(a-1)+1$ and assume that $2^a \equiv -2 \pmod{\ell^r}$.

We have $(\rho_f|_{G_2})^{\mathrm{ss}} \cong \chi\psi \oplus \psi$ for the unramified character $\psi \colon G_2 \to \mathcal{O}_\lambda^{\times}$ with $\psi(\Frob_2)=\iota_\ell(a_2)$. By \cite[Theorem 3]{atkin-lehner}, $a_2=-\varepsilon_2 2^{\frac{k-2}{2}}$.  Let $K$ be the fixed field of the kernel of $\rho_f \mod \lambda^r$. By Chebotarev's density theorem,  a positive proportion of primes $Q$ have $\Frob_Q$  conjugate to $\Frob_2^a$ in $\Gal(K(\zeta_{\ell^r})/\Q)$. For such $Q$, we have 
\begin{gather*}
Q \equiv \chi(\Frob_Q) \equiv \chi(\Frob_2^a) \equiv 2^a \equiv -2 \pmod{\ell^r}.
\end{gather*}
 We also have
\begin{gather*}
\begin{aligned}
 a_Q = \operatorname{tr}\rho_f(\Frob_Q) \equiv \operatorname{tr} \rho_f(\Frob_2^a) &\equiv (-\varepsilon_2 2^{\frac{k-2}{2}})^a2^a+(-\varepsilon_2 2^{\frac{k-2}{2}})^a \\
&\equiv (-\varepsilon_2)^a2^{a\frac{k-2}{2}}(2^a+1) \equiv -(-\varepsilon_2)^a Q^{\frac{k-2}{2}} \mod \lambda^r.
\end{aligned}
\end{gather*}
The theorem is now proven.
\end{proof}

A similar argument establishes the following theorem, albeit with a stronger hypothesis and slightly weaker conclusion (due to the lack of an analogue of Lemma~\ref{elementary arithmetic}).

\begin{theorem} \label{weight k at 3}
	Suppose that $\ell\geq 5$ is prime, that  $m$ is a natural number, and that  there exists an integer $a$ such that $3^a \equiv -2 \pmod{\ell^m}$. Then 
	a positive proportion of primes $Q \equiv -2 \pmod{\ell^m}$ have the following property: for every newform $f \in S_{k}^{\mathrm{new}}(6, \varepsilon_2, \varepsilon_3)$, we have $f\big|T(Q) \equiv -(-\ep_3)^a Q^{\frac{k-2}{2}} f \pmod{\lambda^{em}}$.
\end{theorem}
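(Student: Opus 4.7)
The plan is to mirror the proof of Theorem~\ref{weight k at 2}, substituting the local structure of $\rho_f$ at the prime $3$ for that at $2$. Since the hypothesis $3^a \equiv -2 \pmod{\ell^m}$ is imposed directly, and the conclusion is only claimed for newforms, I can avoid Lemma~\ref{lem:from-lambda-to-ell}; the absence of an analogue of Lemma~\ref{elementary arithmetic} for $3$ is precisely why both of these features of the statement are unavoidable.

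Fix a newform $f \in S_k^{\mathrm{new}}(6, \varepsilon_2, \varepsilon_3)$ and realize $\rho_f$ over $\cO_\lambda$. Since $3 \| 6$ and $f$ is new at $3$, Theorem~\ref{gal-rep}\eqref{gal-rep:at-q} gives that $\rho_f|_{G_3}$ is an extension of an unramified character $\psi$ by $\chi\psi$, where $\psi(\Frob_3) = \iota_\ell(a_3)$ and, by \cite[Theorem~3]{atkin-lehner}, $a_3 = -\varepsilon_3 \cdot 3^{(k-2)/2}$. Although $\Frob_3$ lifts to $G_3$ only modulo inertia, $\rho_f(I_3)$ is unipotent with trivial semisimplification, so for any lift $\tilde\Frob_3 \in G_3$ the matrix $\rho_f(\tilde\Frob_3)$ is upper triangular with diagonal entries $3a_3$ and $a_3$. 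In particular, $\operatorname{tr}\rho_f(\tilde\Frob_3^a)$ is independent of the lift and equals $a_3^a(3^a + 1)$.

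Next, let $L/\Q$ be the finite Galois extension, unramified outside $\{2,3,\ell\}$, obtained by composing $\Q(\zeta_{\ell^m})$ with the fixed fields of the kernels of $\rho_f \pmod{\lambda^{em}}$ as $f$ ranges over newforms in $S_k^{\mathrm{new}}(6, \varepsilon_2, \varepsilon_3)$. Fix a lift $\tilde\Frob_3 \in G_\Q$ of $\Frob_3$. By Chebotarev's density theorem applied to $L/\Q$, a positive proportion of primes $Q$ have $\Frob_Q$ conjugate in $\Gal(L/\Q)$ to the image of $\tilde\Frob_3^a$. For such $Q$,
\begin{gather*}
Q \equiv \chi(\Frob_Q) \equiv \chi(\tilde\Frob_3)^a = 3^a \equiv -2 \pmod{\ell^m},
\end{gather*}
and using $\ell^m \cO_\lambda = \lambda^{em}$ together with $3^a + 1 \equiv -1 \pmod{\lambda^{em}}$,
\begin{gather*}
a_Q \equiv \operatorname{tr}\rho_f(\tilde\Frob_3^a) = a_3^a(3^a + 1) \equiv -(-\varepsilon_3)^a \cdot 3^{a(k-2)/2} \equiv -(-\varepsilon_3)^a Q^{(k-2)/2} \pmod{\lambda^{em}},
\end{gather*}
which gives the desired congruence for every newform simultaneously.

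The one subtlety worth highlighting is the ramification of $\rho_f$ at $3$: $\rho_f(\Frob_3)$ itself is not well-defined, but the unipotent structure of $\rho_f(I_3)$ ensures that the trace of $\rho_f(\tilde\Frob_3^a)$ is well-defined, which is all that is needed. No deeper obstacle appears: the hard work is already done in stating the correct hypothesis at $3$ and identifying the eigenvalue $a_3$ via Atkin--Lehner.
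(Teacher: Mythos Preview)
Your proof is correct and follows essentially the same approach as the paper, which simply says ``proceed as in the last theorem with $2$ replaced by $3$ and $r=em$.'' Your version is in fact slightly more careful: you take the compositum $L$ over all newforms so that a single Chebotarev class works simultaneously, and you make explicit why $\operatorname{tr}\rho_f(\tilde\Frob_3^a)$ is well-defined despite the ramification at $3$ (the paper handles this implicitly by passing to $(\rho_f|_{G_3})^{\mathrm{ss}}$).
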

\begin{proof} We proceed as in the last theorem with 2 replaced by 3 and $r=em$.
\end{proof}

\section{The application to partitions}
We use the results of the last two sections to prove Theorems~\ref{thm:Atkin1}--\ref{thm:Atkin3} from the Introduction.
We begin with  two lemmas.

\begin{lemma} \label{lem:shimura} 
Let   $\ell\geq 5$ be prime, and suppose that $f\in S_k\(1, \nu_\eta^r, \Z\)$ where $(r, 24)=1$ and $k\in \frac12\Z\setminus\Z$.
Suppose that $Q\ge 5$ is a prime and that $\lambda_Q$ is an integer with
\begin{gather*}
g\big |T(Q)\equiv \lambda_Q g\pmod\ell\ \ \text{ for all\ \ } 
g\in S_{2k-1}^{\operatorname{new}}\(6, -\pmfrac{8}{r}, -\pmfrac{12}{r}, \Z\).
\end{gather*}
Then 
\begin{gather*}
f\big | T(Q^2)\equiv \pmfrac{12}Q\lambda_Q f\pmod\ell.
\end{gather*}
\end{lemma}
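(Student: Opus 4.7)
The plan is to use the Shimura correspondence to transfer the problem from the half-integral weight space down to the integral weight space, where the hypothesis applies.  By \eqref{eq:shimcong}, it suffices to show that
$\Sh_t\bigl(f\big| T(Q^2) - \pfrac{12}{Q}\lambda_Q f\bigr)\equiv 0 \pmod\ell$
for every squarefree $t$.  By the commutation \eqref{eq:shimliftcommute} and $\Z$-linearity of $\Sh_t$, this reduces to
\begin{gather*}
\Sh_t(f)\big| T(Q) \equiv \pfrac{12}{Q}\lambda_Q \Sh_t(f) \pmod\ell.
\end{gather*}

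Using \eqref{eq:shimliftimage}, I would write $\Sh_t(f) = h_t\otimes\chi$ where $\chi = \pfrac{12}{\bullet}$ and $h_t \in S^{\operatorname{new}}_{2k-1}\bigl(6, -\pfrac{8}{r}, -\pfrac{12}{r}\bigr)$.  Since $\chi^2 = 1$ and $Q\geq 5$ is coprime to $12$, a direct computation with Fourier expansions yields the twist--Hecke identity $(h\otimes\chi)\big| T(Q) = \chi(Q)\bigl(h\big| T(Q)\bigr)\otimes \chi$.  Substituting and cancelling the unit $\chi(Q)=\pfrac{12}{Q}\in\{\pm 1\}$, it suffices to prove
\begin{gather*}
\bigl(h_t\big| T(Q) - \lambda_Q h_t\bigr)\otimes \chi \equiv 0 \pmod\ell.
\end{gather*}

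The critical step is to verify that $h_t \in S^{\operatorname{new}}_{2k-1}\bigl(6, -\pfrac{8}{r}, -\pfrac{12}{r}, \Z\bigr)$, so that the hypothesis applies directly to $h_t$.  Since $f$ has integer Fourier coefficients, \eqref{eq:shimliftcoeff} gives $\Sh_t(f) \in q\Z[[q]]$; hence $[h_t]_n = \chi(n)[\Sh_t(f)]_n \in \Z$ whenever $\gcd(n, 12) = 1$.  For the remaining indices, I would invoke Atkin--Lehner theory on the squarefree level $6$: every newform $g$ in this eigenspace satisfies $a_p(g) = -\epsilon_p\, p^{(2k-3)/2} \in \Z$ for $p\in\{2,3\}$ (with $\epsilon_2 = -\pfrac{8}{r}$ and $\epsilon_3 = -\pfrac{12}{r}$), independent of the particular newform.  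Decomposing $h_t = \sum c_i g_i$ into newforms and using multiplicativity of Fourier coefficients at coprime indices, for any $n = 2^a 3^b m$ with $\gcd(m, 6) = 1$ we obtain $[h_t]_n = a_{2^a}(g)\,a_{3^b}(g)\,[h_t]_m \in \Z$.  With this integrality established, the hypothesis yields $h_t\big| T(Q) \equiv \lambda_Q h_t \pmod\ell$; twisting by $\chi$ preserves the congruence and closes the loop via \eqref{eq:shimcong}.  The main obstacle is the integrality argument for $h_t$ just described; the rest of the argument is a direct manipulation of the Shimura correspondence.
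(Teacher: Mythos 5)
Your proof is correct and follows essentially the same route as the paper: write $\Sh_t f = F_t \otimes \pfrac{12}{\bullet}$ for each squarefree $t$, commute the twist past $T(Q)$ to reduce to the hypothesis applied to $F_t$, and conclude via \eqref{eq:shimcong}. The one place you go beyond the paper is the explicit verification that $F_t$ has integer Fourier coefficients (using Atkin--Lehner at $p=2,3$ plus multiplicativity); the paper simply asserts $F_t \in S_{2k-1}^{\operatorname{new}}(6,\cdot,\cdot,\Z)$ without comment, so your check is a sound and worthwhile addition.
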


\begin{proof}  
For each squarefree $t$ let
\begin{gather*}
F_t\in  S_{2k-1}^{\operatorname{new}}\(6, -\pmfrac{8}{r}, -\pmfrac{12}{r}, \Z\)
\end{gather*}
be the form with 
\begin{gather*}
\Sh_t f=F_t\otimes\pmfrac{12}\bullet.
\end{gather*}
For each $t$ we have
\begin{gather*}\(F_{t}\otimes \pmfrac{12}\bullet\)\big|T(Q)=\pmfrac{12}Q\(F_t\big|T(Q)\)\otimes \pmfrac{12}\bullet
\equiv \pmfrac{12}Q\lambda_Q F_{t}\otimes \pmfrac{12}\bullet\pmod\ell.
\end{gather*}
In other words, for each squarefree $t$ we have 
\begin{gather*}\Sh_t(f\big|T(Q^2))=\(\Sh_t f\)\big|T(Q)\equiv  \pmfrac{12}Q\lambda_Q \Sh_t f\pmod\ell.
\end{gather*}
The lemma now follows from \eqref{eq:shimcong}.
\end{proof}

The next lemma describes the consequence of finding a ``good" eigenvalue.
\begin{lemma}\label{lem:goodeigen}
Let   $\ell\geq 5$ be prime, and suppose that $f\in S_k\(1, \nu_\eta^r, \Z\)$ where $(r, 24)=1$ and $k\in \frac12\Z\setminus\Z$.
Suppose that   $Q\geq 5$ is prime and that  there exists $\alpha_Q\in \{\pm1\}$
with
\begin{gather*}
f\big|T(Q^2)\equiv \alpha_Q Q^{k-\frac32}f\pmod\ell.
\end{gather*}
Then we have 
\begin{gather*}
a(Q^2n)\equiv 0\pmod\ell\  \ \  \ \text{if}\ \  \ \pmfrac nQ=\alpha_Q\pmfrac{12}Q\pmfrac{-1}Q^{k-\frac12}.
\end{gather*}
\end{lemma}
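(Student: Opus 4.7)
The plan is to extract the conclusion directly from the explicit Hecke-operator formula \eqref{eq:heckedef} by comparing Fourier coefficients on both sides of the congruence $f\big|T(Q^2)\equiv \alpha_Q Q^{k-\frac32}f\pmod\ell$.

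First I would restrict attention to indices $n$ with $Q\nmid n$ (these are the only $n$ for which the hypothesis $\pmfrac{n}{Q}=\alpha_Q\pmfrac{12}{Q}\pmfrac{-1}{Q}^{k-\frac12}$ can hold, since the right-hand side is a unit). For such $n$ the term $a(n/Q^2)$ in \eqref{eq:heckedef} vanishes, so equating the coefficient of $q^{n/24}$ in $f\big|T(Q^2)$ with the coefficient of $q^{n/24}$ in $\alpha_Q Q^{k-\frac32}f$ gives
\begin{gather*}
a(Q^2n)+Q^{k-\frac32}\pmfrac{-1}{Q}^{k-\frac12}\pmfrac{12n}{Q}a(n)\equiv \alpha_Q Q^{k-\frac32}a(n)\pmod\ell.
\end{gather*}
Using multiplicativity of the Legendre symbol to split $\pmfrac{12n}{Q}=\pmfrac{12}{Q}\pmfrac{n}{Q}$, this rearranges to
\begin{gather*}
a(Q^2n)\equiv Q^{k-\frac32}\left(\alpha_Q-\pmfrac{-1}{Q}^{k-\frac12}\pmfrac{12}{Q}\pmfrac{n}{Q}\right)a(n)\pmod\ell.
\end{gather*}

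Next I would observe that the bracket vanishes modulo $\ell$ precisely under the hypothesis on $\pmfrac{n}{Q}$ in the statement: indeed $\alpha_Q$, $\pmfrac{12}{Q}$ and $\pmfrac{-1}{Q}^{k-\frac12}$ are all $\pm1$, so the equality in the bracket is equivalent to
\begin{gather*}
\pmfrac{n}{Q}=\alpha_Q\pmfrac{12}{Q}\pmfrac{-1}{Q}^{k-\frac12},
\end{gather*}
which yields $a(Q^2n)\equiv 0\pmod\ell$ as desired. Since the Fourier expansion \eqref{eq:fourier-expansion} is supported on $n\equiv r\pmod{24}$ and the Hecke operator preserves this support (as reflected in \eqref{eq:heckedef}), there is no issue with indexing. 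There is no genuine obstacle here—the only thing to verify is the bookkeeping of the sign factors and the observation that the case $Q\mid n$ need not be considered because the hypothesis on the Legendre symbol already excludes it.
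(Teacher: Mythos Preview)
Your proposal is correct and takes essentially the same approach as the paper: the paper's proof simply says that the result follows from \eqref{eq:heckedef}, noting that for such $n$ the middle term of the Hecke operator cancels against $\alpha_Q Q^{k-\frac32}a(n)$ and the third term does not contribute. You have written out this cancellation explicitly, but the argument is identical.
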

\begin{proof} This follows from \eqref{eq:heckedef}.  For such $n$, the middle term in the definition of the Hecke operator 
cancels against the same term in $\alpha_Q Q^{k-\frac32}f$, and the third term  does not contribute.
\end{proof}

\begin{proof}[Proof of Theorem~\ref{thm:Atkin1}]
For $\ell\geq 13$, let  $f_\ell=\sum a(n)q^\frac n{24}\in S_\frac{\ell-2}2\(1, \nu_\eta^{-\ell}, \Z\)$ be the modular form  in \eqref{eq:fldef}.
By Theorem~\ref{thm:lowweight}, a positive proportion of primes $Q\equiv 1\pmod\ell$ have the property that for all 
$g\in S_{\ell-3}^{\operatorname{new}}\(6, -\pfrac{8}{-\ell}, -\pfrac{12} {-\ell}\)$ we have 
\begin{gather*}g\big|T(Q)\equiv g\pmod\ell.\end{gather*}
For such primes it follows from Lemma~\ref{lem:shimura}  that 
\begin{gather*}f_\ell\big|T(Q^2)\equiv \pmfrac{12}Q f_\ell\pmod\ell.\end{gather*}
Since $a(n)\equiv p\pfrac{\ell n+1}{24}\pmod\ell$, it follows from  Lemma~\ref{lem:goodeigen}
that 
\begin{gather*}p\pmfrac{Q^2\ell n+1}{24}\equiv 0\pmod\ell\qquad\text{if}\qquad \pmfrac nQ=\pmfrac{-1}Q^\frac{\ell-3}2.
\end{gather*}

\end{proof}

\begin{proof}[Proof of Theorem~\ref{thm:Atkin2}]
Suppose that $\ell\geq 13$ is a prime such that $2^a\equiv -2\pmod\ell$ for some $a$.  
 Theorem~\ref{weight k at 2}   guarantees that there exists $\beta\in \{\pm1\}$ and  a positive proportion of primes 
 $Q\equiv -2\pmod\ell$ such  that 
\begin{gather}\label{eq:gq_cong}
g\big|T(Q)\equiv \beta Q^\frac{\ell-5}2 g\pmod\ell\ \ \ \text{for all}\ \ \ g\in S_{\ell-3}^{\operatorname{new}}\(6, -\pfrac{8}{-\ell}, -\pfrac{12} {-\ell}, 
\Z\).
\end{gather}
It follows from Lemma~\ref{lem:shimura} that 
\begin{gather}\label{eq:flq_cong}
f_\ell\big|T(Q^2)\equiv \pmfrac{12}Q \beta Q^\frac{\ell-5}2 f_\ell\pmod\ell.
\end{gather}
From  Lemma~\ref{lem:goodeigen} we conclude that there are Type  I congruences for such primes $Q$.

To prove the existence of Type II congruences for $\ell\geq 5$ we argue in a similar  way, starting with the 
modular form 
$g_\ell=\sum b(n)q^\frac n{24}\in S_\frac{\ell^2-2}2\(1, \nu_\eta^{-1}, \Z\)$  defined in \eqref{eq:gldef}.
In this case we have 
\begin{gather*}b(n)\equiv p \( \mfrac{  n + 1}{24}\)\pmod\ell\ \ \ \ \text{when\ \  \ \ $\pmfrac{-n}\ell=-1$}.\end{gather*}

By  \eqref{eq:shimg} we have $\ep_2=-1$ in Theorem~\ref{weight k at 2}; 
we conclude using that result and  Lemma~\ref{lem:shimura} that for a positive proportion of primes $Q\equiv -2\pmod\ell$ we have 
\begin{gather*}
g_\ell\big|T(Q^2)\equiv -\pmfrac{12}Q Q^\frac{\ell^2-5}2 g_\ell\pmod\ell.
\end{gather*}
By Lemma~\ref{lem:goodeigen} we conclude that 
\begin{gather*}
b(Q^2n)\equiv 0\pmod\ell\  \ \  \ \text{if}\ \  \ \pmfrac nQ=-\pmfrac{-1}Q^{\frac{\ell^2-3}2}=-\pmfrac{-1}Q,
\end{gather*}
which gives a congruence of the form \eqref{eq:atkinII}.
\end{proof}

Finally, we turn to the proof of  Theorem~\ref{thm:Atkin3}.  Here the situation is complicated by the lack of an analogue of Lemma~\ref{elementary arithmetic} for the prime $3$;
this necessitates the added assumption that there are no congruences between newforms in the relevant spaces.
We first need a straighforward lemma.
\begin{lemma}\label{lem:mlambda}
Suppose that the space $S_{k}^{\operatorname{new}}\(6, \ep_2, \ep_3\)$  (where $k$ is even) is spanned by newforms $g_1, \dots, g_d$.
Let  $E$ be a number field containing the coefficients of $g_1, \dots g_d$, let $\cO$ be the ring of integers and let $\lambda$ be a prime of $E$ over the rational prime $\ell$.
Suppose that there is no
congruence $g_i\equiv g_j\pmod\lambda$ with $i\neq j$.
Then  if a non-zero modular form $F\in S_{k}^{\operatorname{new}}\(6, \ep_2, \ep_3, \cO\)$ is expressed as a linear combination
\begin{gather}\label{eq:lincomb}
F=\sum_{i=1}^d c_i g_i\qquad \text{with $c_i\in E$,}
\end{gather}
we have $\ord_\lambda(c_i)\geq 0$ for all $i$.
\end{lemma}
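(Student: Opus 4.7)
The plan is to use strong multiplicity one, in the form of the pairwise non-congruence hypothesis, to construct a Hecke-theoretic projector that picks off each newform $g_{i_0}$ from $F$ modulo $\lambda$, and then to pull integrality back from the image.

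Fix an index $i_0$. By the non-congruence hypothesis, for each $j \neq i_0$ we can choose a prime $p_j \nmid 6\ell$ with $a_{p_j}(g_{i_0}) \not\equiv a_{p_j}(g_j) \pmod{\lambda}$. Consider the operator
\[ \mathcal{D}_{i_0} := \prod_{j \neq i_0}\bigl(T(p_j) - a_{p_j}(g_j)\bigr). \]
Because each $T(p_j)$ preserves $\cO$-integrality of Fourier coefficients for $p_j \nmid 6$, and each $a_{p_j}(g_j)$ lies in $\cO$, the operator $\mathcal{D}_{i_0}$ sends $S_k^{\operatorname{new}}(6, \ep_2, \ep_3, \cO)$ into itself. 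Each $g_i$ is a Hecke eigenform at $p_j$, so $\mathcal{D}_{i_0}$ annihilates $g_i$ for $i \neq i_0$ (the factor indexed by $j = i$ vanishes) and acts on $g_{i_0}$ by the scalar
\[ u_{i_0} := \prod_{j \neq i_0}\bigl(a_{p_j}(g_{i_0}) - a_{p_j}(g_j)\bigr), \]
which satisfies $\ord_\lambda(u_{i_0}) = 0$ by our choice of the $p_j$.

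Applying $\mathcal{D}_{i_0}$ to the expansion $F = \sum_i c_i g_i$ therefore yields $\mathcal{D}_{i_0}(F) = c_{i_0} u_{i_0}\, g_{i_0}$. The left-hand side has Fourier coefficients in $\cO$; reading off the $q$-coefficient (using that $g_{i_0}$ is normalized) gives $c_{i_0} u_{i_0} \in \cO$. Since $u_{i_0}$ is a $\lambda$-adic unit, this forces $\ord_\lambda(c_{i_0}) \geq 0$, and since $i_0$ was arbitrary, this proves the lemma. The only point requiring any care is verifying that $\mathcal{D}_{i_0}$ preserves $\cO$-integrality and acts semisimply with the stated eigenvalues on the newform basis, both of which follow from standard properties of the Hecke operators $T(p)$ for $p \nmid 6$ on the new subspace.
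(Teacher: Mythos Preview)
Your proof is correct. Both arguments hinge on the same idea—using the non-congruence hypothesis to separate the newforms via Hecke operators—but the packaging differs. The paper argues by contradiction: if some $c_i$ had negative $\lambda$-valuation, clearing denominators would give a nontrivial linear dependence among the $g_i$ over $\cO/\lambda$; taking a maximal independent subset and applying a single $T(p)$ (with $p$ chosen to distinguish two of the forms mod $\lambda$) then produces a shorter dependence, a contradiction. Your version is the direct Lagrange-interpolation construction: you build a product $\prod_{j\ne i_0}(T(p_j)-a_{p_j}(g_j))$ that projects onto $g_{i_0}$ with a $\lambda$-unit scalar, then read off the $q$-coefficient. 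Your approach is slightly more explicit and avoids the minimal-counterexample bookkeeping; the paper's approach is a touch more economical in that it only ever applies one Hecke operator at a time. Both are standard and essentially equivalent.
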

\begin{proof}[Proof of Lemma~\ref{lem:mlambda}]
  If the conclusion were false, then 
  clearing denominators in \eqref{eq:lincomb}  would show that the set $\{g_1, \dots, g_d\}$ is linearly dependent over 
$ \cO/\lambda$.
Let $j<d$ be the maximal index for which $\{g_1, \dots, g_j\}$ is linearly independent over $\cO/\lambda$.
Then there is a relation 
\begin{gather}\label{eq:minlincomb}
g_{j+1}\equiv \sum_{i=1}^j \alpha_i g_i\pmod\lambda.
\end{gather}
Write $g_i=\sum b_{i}(n)q^n$, and  assume without loss of generality that $\alpha_1\not\equiv 0\pmod\lambda$.
By   assumption we can find a prime $p\geq 5$ for which 
\begin{gather}
b_{j+1}(p)\not\equiv b_1(p)\pmod\lambda.
\end{gather}
Applying the Hecke operator $T(p)$ to \eqref{eq:minlincomb} gives
\begin{gather*}
b_{j+1}(p)\sum_{i=1}^j \alpha_ig_i\equiv \sum _{i=1}^j b_i(p) \alpha_i g_i.
\end{gather*}
Since $\alpha_1(b_{j+1}(p)-b_1(p))\not\equiv 0\pmod\lambda$, this gives a contradiction.
\end{proof}

\begin{proof}[Proof of Theorem~\ref{thm:Atkin3}]
Suppose that $\ell\geq 13$ is a prime such that $3^a\equiv -2\pmod\ell$ for some $a$.  
Applying Theorem~\ref{weight k at 3} with $m=1$ shows that 
there exists $\beta\in \{\pm1\}$ and  a positive proportion of primes 
 $Q\equiv -2\pmod\ell$ such  that for every newform
$g\in S_{\ell-3}^{\operatorname{new}}\(6, -\pfrac{8}{-\ell}, -\pfrac{12} {-\ell}\)$ we have 
\begin{gather*}g\big|T(Q)\equiv \beta Q^\frac{\ell-5}2 g\pmod\lambda.
\end{gather*}
By  Lemma~\ref{lem:mlambda} it follows that \eqref{eq:gq_cong} holds, and we argue as before to obtain the first conclusion of Theorem~\ref{thm:Atkin3}.
The second conclusion follows in similar fashion.
\end{proof}

\bibliographystyle{amsalpha}
\bibliography{atkin_cong}

\end{document}